\newtheorem{theorem}{\bf Theorem}[section]
\newtheorem{corollary}{\bf Corollary}[section]
\newtheorem{remark}{\bf Remark}[section]
\newtheorem{definition}{\bf Definition}[section]
\newtheorem{example}{\bf Example }[section]
\begin{document}
\title{\bf Further Results Involving Residual and Past Extropy with their Applications}
\author{Mohamed Kayid$^{\dag,}$\thanks{Corresponding author, Email address: drkayid@ksu.edu.sa}\\
%EndAName
~~\\
{\small Department of Statistics and Operations Research}\\
{\small College of Science, King Saud University, P.O. Box 2455, Riyadh 11451, Saudi Arabia}\\
{\small ~~}\\
}
\date{}
\maketitle

\begin{abstract}
%\boldmath
In this paper we focus on the study of the monotonicity properties of the residual and the past extropy as well as on some characterization problems. We then apply the derived results to analyze further stochastic aspects of order statistics, coherent systems, and record values. Using various examples, we illustrate the applicability and significance of the results obtained. In addition, nonparametric estimators for the residual and past extropy measures are introduced. The performance of the acquired estimators has been illustrated using simulated data sets and also real data sets.

\vskip 4mm {\noindent {\em Key Words: Coherent systems; Order statistics; Residual extropy; Past extropy; Records.}}
\end{abstract}

\vskip 2mm
%\noindent {\bf Mathematics Subject Classification:}

\section{Introduction}\label{intro}

Let $X$ be a non-negative random variable (RV) characterized by an absolutely continuous density function (DF) called $f(x)$. The Shannon differential entropy (Shannon \cite{Shannon1948amathematical}) is defined as $\mathcal{H}(X)=-\mathbb{E}[\log f(X)],$ provided that the expectation exists. A recent study by Lad \emph{et al.} \cite{lad2015extropy} introduced a new uncertainty metric known as extropy, which serves as a dual complement to entropy.

Let $X$ be an observable quantity with a finite discrete range of possible values $\{x_1,\ldots,x_N\}$ with probability vector $\textbf{p}_N=(p_1,\ldots,p_N),$ where $p_i=P(X=x_i),\ i=1,\ldots, N$. As pointed out by Lad \emph{et al.} \cite{lad2015extropy}, the extropy is originally defined for a discrete quantity as $J(\mathbf{p}_N)=-\sum_{i=1}^{N}(1-p_i)\log(1-p_i)$. However, if the range of possible values of an RV is extremely refined, as is the case for a continuous RV with DF $f$, then the extropy measure can be approximated by
\begin{equation}\label{extropy}
\mathfrak{J}(X)=-\frac{1}{2}\int_{0}^{\infty} f^2(x) dx,
\end{equation}
provided that the integral exists. Using the extropy in this integral form would only be suitable as an approximation to a truly discrete observation sequence if the observation units are very refined.

The measure of extropy $\mathfrak{J}(X)$ is commonly used to quantify the uncertainty surrounding the lifetime $X$ of a fresh unit. However, in certain scenarios, operators have information about the current age of the system. For example, they know that the system is operational at time $t$ and want to assess the uncertainty surrounding the remaining lifetime $X_t = [X - t \mid X > t]$. In such cases, the conventional extropy $\mathfrak{J}(X)$ is insufficient. In their seminal work, Qiu and Jia \cite{qiu2018residual} introduced a more dynamic version of the extropy of a life span with respect to its age, namely the residual extropy (REX). The REX is defined as:
\begin{eqnarray}
\mathfrak{J}(X;t)&=&-\frac{1}{2}\int_{0}^{\infty}f^2(x;t)dx
=-\frac{1}{2}\int_{0}^{\infty}\left(\frac{f(x+t)}{S(t)}\right)^2dx
=-\frac{1}{2}\int_{t}^{\infty}\left(\frac{f(u)}{S(t)}\right)^2du,\label{def:extropy:time}
\end{eqnarray}
where $f(x;t)={f(x+t)}/{S(t)}$ is the DF of $X_t$ and $S(t)$ is the survival function of $X.$ It is worth noting that the REX has values in the range of $[-\infty,0)$ and corresponds to the extropy of $[X|X>t]$.

From \eqref{def:extropy:time}, the extropy of $X_t$ can be reformulated as
\begin{eqnarray}\label{eq:rex:2}
 \mathfrak{J}(X;t) &=& -\frac{1}{2}\int_{t}^{\infty}\left(\frac{f(u)}{S(t)}\right)^2du= -\frac{1}{4}\int_{t}^{\infty}2\frac{f(u)}{S(u)}\frac{f(u)S(u)}{S(t)S(t)}du\nonumber\\
 &=& -\frac{1}{4}\int_{t}^{\infty}\lambda(u)f_{12}(u|t)du=-\frac{1}{4}\mathbb{E}[\lambda(X_{12})|X_{12}>t],
\end{eqnarray}
where $\lambda(u)=f(u)/S(u)$ represents the hazard rate function of $X$ and $f_{12}(u|t)$ is the DF of $[X_{12}|X_{12}>t].$

Given the duality, it is reasonable to explore the extropy of the inactivity time denoted as $X_{[t]} = [t-X| X \leq t]$. At age $t$, the DF of the past lifetime, $X_{[t]}$, can be expressed as $f(x;[t])=f(t-x)/F(t)$, where $0<x\leq t$ and $F(t)$ is the cumulative distribution function (CDF) of $X.$ Consequently, the past extropy (PEX) is defined as follows (see \cite{sathar2020past}):
\begin{eqnarray}\label{eq:pex}
\widetilde{ \mathfrak{J}}(X;t) =-\frac{1}{2}\int_0^{\infty} f^2(x;[t])  dx=-\frac{1}{2}\int_{0}^t \left[\frac{f(t-x)}{{F}(t)}\right]^2dx=-\frac{1}{2}\int_{0}^t \left[\frac{f(u)}{{F}(t)}\right]^2du.
\end{eqnarray}
Similar to the definition of extropy in equation (\ref{def:extropy:time}), the PEX also falls within the range of $[-\infty,0)$ and corresponds to the extropy of $[X|X\leq t]$. Furthermore, utilizing equation (\ref{eq:pex}), we can provide an alternative expression for the PEX, which is given by

\begin{eqnarray}\label{eq:pex:3}
\widetilde{\mathfrak{J}}(X;t) &=& -\frac{1}{2}\int_{t}^{\infty}\left(\frac{f(u)}{F(t)}\right)^2du= -\frac{1}{4}\int_{t}^{\infty}2\frac{f(u)}{F(u)}\frac{f(u)F(u)}{F(t)F(t)}du\nonumber\\
 &=& -\frac{1}{4}\int_{t}^{\infty}\tau(u)f_{22}(u|t)du=-\frac{1}{4}\mathbb{E}[\tau(X_{22})|X_{22}\leq t].
\end{eqnarray}
where $\tau(u)=f(u)/F(u)$ represents the reversed hazard rate function of $X$ and $f_{22}(u|t)$ is the DF of $[X_{22}|X_{22}\leq t].$

The subsequent two concepts are closely connected to extropies concerning residual life and inactivity time, respectively.
\begin{definition}\rm
Suppose $X$ is a nonnegative RV characterized by an absolutely continuous DF denoted as $f(x)$.
\begin{itemize}
  \item We say that $X$ has decreasing (increasing) residual extropy (DREX(IREX)) if $\mathfrak{J}(X;t)$ is decreasing (increasing) in $t>0$.
  \item We say that $X$ has increasing past extropy (IPEX) if $\widetilde{\mathfrak{J}}(X;t)$ is increasing in $t>0$.
\end{itemize}
\end{definition}

The concept of extropy, which measures information, has been extensively explored by researchers such as Lad \emph{et al.} \cite{lad2015extropy}, Qiu \cite{qiu2017extropy}, and Qiu and Jia \cite{qiu2018extropy, qiu2018residual} and the references therein. Qiu \cite{qiu2017extropy} has conducted a comprehensive study on extropy and has engaged in some comparative studies. The conditions that lead to the uniqueness of extropy in terms of order statistics and record values were also investigated by Qiu \cite{qiu2017extropy}. In a related work,  Shrahili and Kayid \cite{shrahili2023excess} introduced the concept of REX for the ith-order statistic. Their efforts yielded several valuable results. Furthermore, Kayid and Alshehri \cite{kayid2023system} recently explored the PEX of coherent systems. Specifically, they considered a scenario where all system components are assumed to be inactive after a given time. This paper aims to extend the study of REX and PEX by considering their dynamic versions in situations where uncertainty is sought in a given time frame. We develop our findings in the context of order statistics, coherent systems and record values.

This study also aims to fill the gap in the literature regarding past extropy properties of order statistics, coherent systems, and record values. Using extropy is advantageous as it is computationally simpler than other information measures. This allows for efficient calculation of REX and PEX for order statistics, coherent systems, and record values. The proposed method demonstrates the versatility and applicability in reliability and distribution analysis and provides researchers and practitioners with valuable insights for informed decision making. The extropy measure and its dynamical versions have found applicability in the field of practical reliability and pattern recognition (see, e.g.,  Balakrishnan et al. \cite{balakrishnan2022tsallis}, Chakraborty and Pradhan \cite{chakraborty2023cumulative}, Chakraborty et al. \cite{chakraborty2023weighted} and the reference therein). By leveraging the insights and techniques from these publications, researchers and practitioners can further enhance their models and algorithms in these domains.
\medskip

This paper is organized as follows: In Section 2, we delve into an overview of coherent systems and explore $k$-record values. Section 3 unveils novel findings concerning the REX of an RV. We then apply these results to analyze order statistics, coherent systems, and record values. In Section 4, we provide the same novel findings concerning the PEX. Nonparametric estimators for the proposed extropy measures are introduced, and their performance is illustrated using simulated and real data sets in Section 5. Finally, in Section 6, we summarize our contributions and propose avenues for future research.

\section{Preliminaries}
In this section, we provide a brief overview of coherent systems and $k$-record values, which will play a crucial role in the subsequent analysis and developments of this paper.

\subsection{Coherent systems}

In the context of reliability theory, a coherent system is characterized by the absence of irrelevant components and a monotone structure function. The definition and fundamental properties of coherent systems can be found in references such as Barlow and Proschan \cite{barlow1975statistical}. An example of a coherent system is the $(n-i+1)$-out-of-$n$ system, which functions properly as long as at least $i$ components are operational. Assume a coherent system comprising $n$ independent and identically distributed (i.i.d.) components with absolutely continuous lifetimes denoted as $X_1, \cdots, X_n$, all following a common CDF $F$ and the survival function $S(x).$ The lifetime of the coherent system can be represented by $T$. Additionally, let $X_{1:n} < \cdots < X_{n:n}$ represent the order statistics of the component lifetimes. The system's signature is defined as the vector $\textbf{s}=(s_1,\cdots,s_n)$, where $s_i = P(T=X_{i:n})$ for $1\leq i\leq n$, indicating the probability that the $i$-th failure results in system failure. It has been proven that (see e.g., in Samaniego \cite{samaniego2007system}):
\begin{eqnarray}
S_T(t)&=&\sum_{i=1}^n s_iS_{i:n}(t),\label{def:survival:T}\\
F_T(t)&=&\sum_{i=1}^n s_iF_{i:n}(t),\label{def:CDF:T}
\end{eqnarray}
where
\begin{eqnarray}
S_{i:n}(t)&=&\sum_{k=0}^{i-1}{n\choose k}F^k(t)S^{n-k}(t),\label{def:survival:Xin}\\
F_{i:n}(t)&=&\sum_{k=i}^{n}{n\choose k}F^k(t)S^{n-k}(t),\label{def:CDF:Xin}
\end{eqnarray}
is the survival function and distribution function of $X_{i:n}.$ Consequently, we have the following expression:
\begin{equation*}\label{density:T}
f_T(t)=\sum_{i=1}^n s_i f_{i:n}(t),
\end{equation*}
where
\begin{equation*}\label{def:orderstatistics}
f_{i:n}(t)=i{n\choose i}[F(t)]^{i-1}[S(t)]^{n-i}f(t),\ t>0,
\end{equation*}
which represents the DF of $X_{i:n}$ (see David and Nagaraja \cite{david2004order}). Equation $\eqref{def:survival:T}$ indicates that the distribution of the coherent system's lifetime is solely determined by the system's design through its corresponding signature. Further details can be found in Samaniego \cite{samaniego2007system} and the references cited therein.

It is worth noting that the hazard rate function of $T$ can be expressed as
\begin{equation}\label{hazard:system}
\lambda_{T}(t)=\frac{f_T(t)}{S_T(t)}=\frac{\sum_{i=0}^{n-1}(n-i)s_{i+1}{n\choose i}(\phi(t))^{i}}{\sum_{i=0}^{n-1}\left(\sum_{j=i+1}^{n}s_j\right){n\choose
i}(\phi(t))^i}\lambda(t),
\end{equation}
where $\phi(t)=\frac{F(t)}{S(t)}$ is an increasing function of $t\in (0,\infty)$ and $\lambda(t)=\frac{f(t)}{S(t)}$ represents the hazard rate function of $X$. It is clear that the function $\lambda_{T}(t)/\lambda(t)$ is increasing in $t$ if the rational function
\begin{equation}\label{rational}
\psi(x)=\frac{\sum_{i=0}^{n-1}(n-i)s_{i+1}{n\choose i}x^i}{\sum_{i=0}^{n-1}\left(\sum_{j=i+1}^{n}s_j\right){n\choose
i}x^i},
\end{equation}
is increasing with respect to $x\in (0,\infty).$

%===============================================================
\subsection{k-Records}
%===============================================================

Let us introduce some preliminary concepts related to $k$-record values. Consider a sequence of i.i.d. RVs denoted by $\{X_i, i\geq 1\}$, with CDF $F(x)$ and DF $f(x)$. An observation $X_j$ is referred to as an upper record value if it is greater than $X_i$ for every $j>i$. To quantify these upper record values, Dziubdziela and Kopocinski \cite{dziubdziela1976limiting} introduced the indices $\{R_k(n), n\geq 1\}$, which represent the times of the $n$-th upper $k$-record for the sequence $\{X_i, i\geq 1\}$. These indices are defined by
\[
R_k(1)=1,\ R_k(n+1)=\min\{j:j>R_k(n),X_{j:j+k-1}>X_{R_k(n):R_k(n)+k-1}\},
\]
where $X_{j:m}$ denotes the $j$-th order statistic derived from a set of i.i.d. RVs of size $m$. With this in mind, we can define $U_{n(k)}$ as a sequence of $n$-th upper $k$-record values from the sequence $\{X_i, i\geq 1\}$, which can be expressed as $U_{n(k)}=X_{R_k(n):R_k(n)+k-1}$. Consequently, we have
\begin{equation}\label{PDFkth}
f_{n(k)}(x)=\frac{k^{n}}{\Gamma(n)}[\overline{F}(x)]^{k-1}[-\log \overline{F}(x)]^{n-1}f(x),\ x>0,
\end{equation}
\begin{equation}\label{suv:Xn+1}
\overline{F}_{n(k)}(x) =  [\overline{F}(x)]^k\, \sum_{i=0}^{n-1} \frac{[-k\log \overline{F}(x)]^i}{i!}=\frac{\Gamma(n,-k\log \overline{F}(x))}{\Gamma(n)}, \;\; x \geq 0,
\end{equation}
where
\begin{equation*}\label{gamma}
\Gamma(a,x)=\int_{x}^{\infty}u^{a-1}e^{-u}du,\ a,x>0,
\end{equation*}
is the incomplete gamma function and $\Gamma(n)=\Gamma(n,0)$ is the complete gamma function. Based on \eqref{PDFkth} and \eqref{suv:Xn+1}, we can express the hazard rate function of $U_{n(k)}$ as
\begin{equation}\label{hazrd:krecord}
\lambda_{n(k)}(t)=\frac{f_{n(k)}(t)}{\overline{F}_{n(k)}(t)}
=\frac{k^n\varphi^{n-1}(t)}{\Gamma(n)\sum_{i=0}^{n-1} \frac{(k\varphi(t))^i}{i!}}\lambda(t),
\end{equation}
where $\varphi(t)=-\log\overline{F}(t)$, which is an increasing function of $t\in (0,\infty).$

%-------------------------------------------------------------------------------------
\section{Results on Residual Extropy}
In reliability theory, stochastic comparisons between the RV $X$ and its residual lifetime are commonly employed to study aging properties. Two well-known properties are the new better than used (NBU) property and the decreasing mean residual life (DMRL) property. The NBU property holds if, for all $t>0$, the RV $X$ stochastically dominates $X_t$ in the usual stochastic order. On the other hand, the DMRL property holds if, for all $t\geq0$, $X$ stochastically dominates $X_t$ in terms of the mean residual life order. Detailed explanations and definitions of stochastic orders can be found in Shaked and Shanthikumar \cite{shaked2007stochastic}. The forthcoming theorem provides a characterization of the DREX property.
\begin{theorem}\label{thm:Tsallis}
An RV $X$ with DF $f$ is DREX if and only if
\begin{equation}\label{extropy:equi}
\mathfrak{J}(X_s;t)\leq \mathfrak{J}(X;t),
\end{equation}
for all $s,t>0.$
\end{theorem}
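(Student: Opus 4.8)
The plan is to reduce the two-parameter inequality \eqref{extropy:equi} to a pure monotonicity statement about the single function $t \mapsto \mathfrak{J}(X;t)$ by first establishing a shift identity. The residual lifetime $X_s = [X - s \mid X > s]$ has density $f_{X_s}(u) = f(u+s)/S(s)$ and survival function $S_{X_s}(t) = S(t+s)/S(s)$. Substituting these into the defining formula \eqref{def:extropy:time} applied to $X_s$, the normalizing factors $S(s)$ in the numerator and denominator cancel, and after the change of variable $v = u+s$ the lower limit $t$ is carried to $t+s$. I expect this to produce the clean identity
\[
\mathfrak{J}(X_s;t) = \mathfrak{J}(X;t+s), \qquad s,t>0.
\]

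With this identity in hand, the stated inequality \eqref{extropy:equi} becomes exactly $\mathfrak{J}(X;t+s) \le \mathfrak{J}(X;t)$ for all $s,t>0$. Since $s$ ranges over all positive reals, this is the same as saying $\mathfrak{J}(X;a) \le \mathfrak{J}(X;t)$ whenever $a > t$, which is precisely the statement that $\mathfrak{J}(X;\cdot)$ is decreasing, i.e. the DREX property. Both directions then follow at once: if $X$ is DREX the shifted value $\mathfrak{J}(X;t+s)$ is dominated by $\mathfrak{J}(X;t)$ for every $s>0$; conversely, assuming \eqref{extropy:equi} for all $s,t>0$ forces the required monotonicity of $\mathfrak{J}(X;\cdot)$.

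The only point requiring care—rather than a real obstacle—is the bookkeeping in the change of variables: confirming that the $S(s)$ factors truly cancel in the squared ratio and that the integrability of $f^2$ over $(t+s,\infty)$ is inherited from its integrability over $(t,\infty)$, so that $\mathfrak{J}(X_s;t)$ is well defined whenever $\mathfrak{J}(X;t)$ is. Everything beyond this is a direct translation between the definition of DREX and the pairwise comparison in \eqref{extropy:equi}, so no deeper machinery is needed.
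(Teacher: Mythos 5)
Your proposal is correct and follows essentially the same route as the paper: both establish the shift identity $\mathfrak{J}(X_s;t)=\mathfrak{J}(X;s+t)$ by the cancellation of the $S(s)$ factors and the change of variable $u=x+s$, and then read off the equivalence with the definition of DREX. No further comment is needed.
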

\begin{proof}
Recall that $X$ is DREX if and only if
\begin{equation}\label{extropy:equi1}
\mathfrak{J}(X;s+t)\leq \mathfrak{J}(X;t),\ \text{for all}\ s,t>0.
\end{equation}
By noting that
\[
S(x;s)=\frac{S(x+s)}{S(s)},\ \ f(x;s)=\frac{f(x+s)}{S(s)},
\]
for all $s,t\geq0,$ we have,
\begin{eqnarray*}
\mathfrak{J}(X_s;t)&=&-\frac{1}{2}\int_{t}^{\infty}\left[\frac{f(x;s)}{S(x;s)}\right]^2dx\nonumber\\
&=&-\frac{1}{2}\int_{t}^{\infty}\left[\frac{f(x+s)}{S(t+s)}\right]^2dx\ (\text{by taking u=x+s})\nonumber\\
&=&-\frac{1}{2}\int_{s+t}^{\infty}\left[\frac{f(u)}{S(t+s)}\right]^2du.\label{JXst}
\end{eqnarray*}
The conclusion drawn from the previous step is that $\mathfrak{J}(X_s;t)=\mathfrak{J}(X;s+t)$. This establishes the equivalence between \eqref{extropy:equi1} and \eqref{extropy:equi}. Therefore, the proof is now complete.
\end{proof}
The overall characterization problem aims to determine the unique identification of a distribution function based on the REX. Toomaj \emph{et al.} \cite{toomaj2023extropy} have shown that when the DF $f(x)$ is decreasing in $x$, the distribution function $X$ can be uniquely identified by its REX function $\mathfrak{J}(X;t)$. Similar conclusions can be obtained based on the monotonicity properties of the REX function. This area of research holds significant potential for further exploration, as evidenced by the diverse range of results and characterizations presented in the existing literature.
\begin{theorem}
If $X$ has an absolutely continuous CDF $F(t)$ and satisfies the IREX property, then $\mathfrak{J}(X;t)$ uniquely determines $F(t).$
\end{theorem}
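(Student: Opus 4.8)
The plan is to recover the hazard rate function $\lambda(t)=f(t)/S(t)$ explicitly from the REX function, since $\lambda$ in turn determines $S(t)=\exp\!\big(-\int_0^t\lambda(u)\,du\big)$ and hence $F(t)$. Writing $V(t)=\mathfrak{J}(X;t)=-\frac{1}{2S^2(t)}\int_t^\infty f^2(u)\,du$ and introducing $G(t)=\int_t^\infty f^2(u)\,du$, so that $G(t)=-2V(t)S^2(t)$, I would first differentiate. Using $G'(t)=-f^2(t)$ and $S'(t)=-f(t)$, and substituting $G(t)=-2V(t)S^2(t)$, a short computation yields the identity
\[
V'(t)=\tfrac{1}{2}\lambda^2(t)+2V(t)\lambda(t),
\]
valid for almost every $t>0$ (differentiability is supplied by the absolute continuity of $F$).

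Next I would read this identity as a quadratic equation in the unknown $\lambda(t)$, namely $\lambda^2(t)+4V(t)\lambda(t)-2V'(t)=0$, whose two roots are
\[
\lambda(t)=-2V(t)\pm\sqrt{4V^2(t)+2V'(t)}.
\]
Because the entire right-hand side is a functional of the given REX $V$ and its derivative $V'$, recovering $\lambda$ reduces to choosing the correct sign.

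The decisive step, and the one I expect to be the main obstacle, is to argue that the IREX hypothesis pins down this sign uniquely. Here I would invoke two facts: $V(t)=\mathfrak{J}(X;t)\in[-\infty,0)$, so $-2V(t)>0$, and any hazard rate satisfies $\lambda(t)\geq 0$. Under IREX we have $V'(t)\geq 0$, hence $\sqrt{4V^2(t)+2V'(t)}\geq\sqrt{4V^2(t)}=-2V(t)$, which forces the minus-sign root to be nonpositive and therefore inadmissible. Consequently only the plus root survives,
\[
\lambda(t)=-2V(t)+\sqrt{4V^2(t)+2V'(t)},
\]
so $\lambda$, and thus $F$, is determined by $V$ alone. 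I would then close the uniqueness argument in the standard way: if two absolutely continuous CDFs share the same REX function and both satisfy IREX, the displayed formula assigns them identical hazard rates for almost all $t$, whence identical survival functions and identical $F$. The point of the monotonicity assumption, and the reason it cannot be dropped, is exactly this root-selection mechanism: when $V'(t)<0$ both roots can be strictly positive, and the distribution is then not recoverable from $V$ alone.
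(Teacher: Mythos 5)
Your proposal is correct and follows essentially the same route as the paper: differentiate the REX to obtain a quadratic identity in the hazard rate, then use the IREX hypothesis ($\mathfrak{J}'(X;t)\ge 0$) to show that exactly one nonnegative root is admissible, which recovers $\lambda(t)$ and hence $F$. The only differences are cosmetic---you solve the quadratic explicitly and discard the nonpositive root, whereas the paper argues via convexity of $\xi_t(x)=\frac{x^2}{2}-x\mathfrak{J}(X;t)-\mathfrak{J}'(X;t)$ that it has a unique positive zero---and your differential identity $\mathfrak{J}'(X;t)=\frac{1}{2}\lambda^2(t)+2\lambda(t)\mathfrak{J}(X;t)$ is in fact the correct one (the paper's displayed identity has a coefficient/sign slip in the $\lambda(t)\mathfrak{J}(X;t)$ term, though this does not affect the structure of the argument).
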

\begin{proof}
It is evident that
\[
\mathfrak{J}'(X;t)=\frac{\lambda^2(t)}{2}-\lambda(t)\mathfrak{J}(X;t).
\]
If we substitute $x=\lambda(t)$, then, for a fixed $t>0$, $\lambda(t)$ becomes a positive solution of the following equation
\begin{equation}\label{derviative}
\xi_t(x)=\frac{x^2}{2}-x\mathfrak{J}(X;t)-\mathfrak{J}'(X;t).
\end{equation}
Furthermore, the increasing nature of $\mathfrak{J}(X;t)$ implies that $\xi_t(0)=-\mathfrak{J}'(X;0)\leq0$ and $\xi_t(\infty)=\infty$. On the other hand, we can deduce that
\[
\xi_t'(x)=x-\mathfrak{J}(X;t)\ \text{and}\ \xi_t''(x)=1>0.
\]
The function $\xi_t(x)$ exhibits a pattern of initially decreasing and then increasing with respect to $x$, reaching a minimum at $x_t=\mathfrak{J}(X;t)$. This implies that equation \eqref{derviative} possesses a unique positive solution $\lambda(t)$ for all $t$. As a result, $\mathfrak{J}(X;t)$ uniquely determines $\lambda(t)$ and consequently the distribution function $F$.
\end{proof}

Consider another continuous RV $Y$ with DF $f_Y(x)$, CDF $F_Y(x)$, survival function $S_Y(x)$, hazard rate $\lambda_Y(x)$, and reversed hazard rate $\tau_Y(x)$. In the context of stochastic orders, we say that RV $X$ is smaller than $Y$ in the likelihood ratio order, denoted as $X\leq_{lr}Y$ if the likelihood ratio $f_Y(x)/f_X(x)$ is increasing in $x$ over the union of their supports. For further information on stochastic orders, interested readers can refer to the work of Shaked and Shanthikumar \cite{shaked2007stochastic}. Now, we have the following theorem.
\begin{theorem}\label{thm:DREX}
Assume that $X\leq_{lr}Y$ and $\lambda_Y(t)/\lambda_X(t)$ is increasing for $t\geq0$. If $X$ has the DREX property, then it follows that $Y$ also possesses the DREX property.
\end{theorem}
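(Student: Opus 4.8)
The plan is to reduce the DREX property to a single tractable inequality in the hazard rate and then transport that inequality from $X$ to $Y$. First I would record the derivative of the residual extropy: differentiating \eqref{def:extropy:time} and integrating by parts yields
\[
\mathfrak{J}'(X;t)=-\frac{\lambda_X(t)}{2S_X^2(t)}\int_t^\infty \lambda_X'(v)\,S_X^2(v)\,dv .
\]
Since $\lambda_X(t)>0$ and $S_X^2(t)>0$, this shows that $X$ is DREX if and only if $A_X(t):=\int_t^\infty \lambda_X'(v)S_X^2(v)\,dv\ge 0$ for every $t>0$, and likewise $Y$ is DREX iff $A_Y(t)\ge 0$. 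This reformulation is the key simplification, as it replaces the monotonicity of $\mathfrak{J}(X;\cdot)$ by the nonnegativity of a weighted tail integral of $\lambda_X'$.

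Next I would exploit the factorization $\lambda_Y=\rho\,\lambda_X$, where $\rho(t)=\lambda_Y(t)/\lambda_X(t)$ is increasing by hypothesis, so that $\lambda_Y'=\rho'\lambda_X+\rho\,\lambda_X'$ and hence
\[
A_Y(t)=\int_t^\infty \rho'(v)\lambda_X(v)S_Y^2(v)\,dv+\int_t^\infty \rho(v)\lambda_X'(v)S_Y^2(v)\,dv .
\]
The first integral is nonnegative because $\rho'\ge 0$, $\lambda_X>0$ and $S_Y^2>0$. For the second I would rewrite it as $\int_t^\infty \lambda_X'(v)S_X^2(v)\,c(v)\,dv$ with weight $c(v):=\rho(v)\,[S_Y(v)/S_X(v)]^2$, and observe that $c$ is increasing: $\rho$ is increasing by assumption, while $S_Y/S_X$ is increasing because $X\leq_{lr}Y$ forces $X\leq_{hr}Y$, and the product of two nonnegative increasing functions is increasing. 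This is the step in which both hypotheses are used at once, and I expect it to be the crux of the argument.

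The final ingredient is a monotone-reweighting lemma: if $G(t):=\int_t^\infty g(v)\,dv\ge 0$ for all $t$ and $c\ge 0$ is increasing, then $\int_t^\infty c(v)g(v)\,dv\ge 0$ for all $t$. This follows from integration by parts,
\[
\int_t^\infty c(v)g(v)\,dv=c(t)G(t)+\int_t^\infty c'(v)G(v)\,dv ,
\]
both terms being nonnegative once the boundary term $c(v)G(v)$ is seen to vanish as $v\to\infty$, which is guaranteed by finiteness of the extropy. Applying it with $g=\lambda_X'S_X^2$, whose tail integral $G=A_X$ is nonnegative precisely because $X$ is DREX, makes the second integral nonnegative; hence $A_Y(t)\ge 0$ for all $t$, i.e.\ $Y$ is DREX. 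The main obstacle is carrying out this reweighting cleanly — recognizing that $c=\rho\,(S_Y/S_X)^2$ is the correct increasing weight and controlling the boundary term — whereas the derivative identity and the splitting of $\lambda_Y'$ are routine once set up.
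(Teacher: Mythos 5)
Your argument is correct and rests on the same two pillars as the paper's proof: a characterization of DREX as nonnegativity of a tail functional of the hazard rate, and the Barlow--Proschan reweighting lemma (their Lemma 7.1(i)) combined with monotonicity of $\lambda_Y/\lambda_X$. The packaging, however, is genuinely different. The paper writes $-4\mathfrak{J}(Y;t)=\mathbb{E}[\eta(Y_{12})\lambda_X(Y_{12})\mid Y_{12}>t]$ with $Y_{12}=\min(Y_1,Y_2)$, reduces DREX of $Y$ to $\beta(t)\ge\eta(t)\lambda_X(t)$, and splits the \emph{difference} $\eta(y)\lambda_X(y)-\eta(t)\lambda_X(t)$ into two terms, reweighting the first by $(f_Y/f_X)^2$; your version differentiates the identity $\lambda_Y=\rho\lambda_X$ and splits the \emph{derivative} $\lambda_Y'=\rho'\lambda_X+\rho\lambda_X'$, reweighting by $c=\rho\,(S_Y/S_X)^2=f_YS_Y/(f_XS_X)$. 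The two are integration-by-parts duals of one another: your $A_X(t)=\int_t^\infty\lambda_X'S_X^2$ equals the paper's $\int_t^\infty 2f_XS_X[\lambda_X(y)-\lambda_X(t)]\,dy$, and your $\rho'$ term plays the role of the paper's $A_2$. What your route buys is a cleaner, more symmetric bookkeeping with no auxiliary random variable $X_{12}$; what it costs is regularity: you need $\lambda_X$, $\lambda_Y$ and $\rho$ to be differentiable, whereas the paper's difference formulation only uses the hazard rates themselves. Two small points to tidy up: the claim that the boundary term $c(v)G(v)\to 0$ follows from finiteness of the extropy is not obvious (since $c$ may be unbounded), and is better avoided altogether by proving the reweighting lemma via Tonelli, writing $c(v)=c(t)+\int_t^v c'(s)\,ds$ and interchanging the order of integration, which gives $\int_t^\infty cg=c(t)G(t)+\int_t^\infty c'(s)G(s)\,ds\ge 0$ directly; and you should state explicitly that $X\le_{lr}Y$ implies $X\le_{hr}Y$, hence $S_Y/S_X$ increasing, with a reference to Shaked and Shanthikumar.
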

\begin{proof}
Let $\eta(t)=\lambda_Y(t)/\lambda_X(t)$. Based on \eqref{eq:rex:2}, we have the following relationship
\begin{eqnarray*}
% \nonumber % Remove numbering (before each equation)
  -4\mathfrak{J}(Y;t)&=&\mathbb{E}[\lambda_Y(Y_{12})|Y_{12}>t] \\
  &=&\mathbb{E}[\eta(Y_{12})\lambda_X(Y_{12})|Y_{12}>t].
\end{eqnarray*}
Therefore, the goal is to prove that $\beta(t)=\mathbb{E}[\eta(Y_{12})\lambda_X(Y_{12})|Y_{12}>t]$ is an increasing function of $t\geq0$. By performing some manipulation, we obtain the following expression:
\begin{eqnarray*}
% \nonumber % Remove numbering (before each equation)
  \beta'(t)&=&\left(\frac{\int_{t}^{\infty}\eta(y)\lambda_X(y)2f_Y(y)S_Y(y)dy}
  {S_Y^2(t)}\right)' \\
  &=&\frac{-\eta(t)\lambda_X(t)2f_Y(t)S^3_Y(t)+2f_Y(t)S_Y(t)\int_{t}^{\infty}\eta(y)
  \lambda_X(y)2f_Y(y)S_Y(y)dy}{S_Y^4(t)} \\
  &=&2\lambda_Y(t)[\beta(t)-\eta(t)\lambda_X(t)].
\end{eqnarray*}
So, we need to show that $\beta'(t)\geq0$ for all $t>0.$ In order to demonstrate that $\beta(t)\geq\eta(t)\lambda_X(t)$ for all $t\geq0$, it is sufficient to establish the following inequality for all $t\geq0$:
\begin{eqnarray*}
% \nonumber % Remove numbering (before each equation)
  &&\beta(t)-\eta(t)\lambda_X(t)\\
  &=&\int_{t}^{\infty}\frac{2f_Y(y)S_Y(y)}
  {S_Y^2(t)}[\eta(y)\lambda_X(y)-\eta(t)\lambda_X(t)]dy \\
  &=&\int_{t}^{\infty}\frac{2f_Y(y)S_Y(y)}
  {S_Y^2(t)}\eta(y)[\lambda_X(y)-\lambda_X(t)]dy \\
  &+&\int_{t}^{\infty}\frac{2f_Y(y)S_Y(y)}
  {S_Y^2(t)}\lambda_X(t)[\eta(y)-\eta(t)]dy\\
  &=&A_1(t)+A_2(t)\geq0.
\end{eqnarray*}
Let us denote $\alpha(t)=\mathbb{E}[\lambda_X(X_{12})|X_{12}>t]$. Based on the assumption that $\mathfrak{J}(X;t)$ is decreasing for $t\geq0$, we can equivalently conclude that $\alpha(t)$ is an increasing function for all $t\geq0$. It is worth noting that $\alpha'(t)=2\lambda_X(t)(\alpha(t)-\lambda_X(t))$, and we have $\alpha(t)\geq\lambda_X(t)$ for all $t\geq0$. In other words, we have the following inequality
\begin{equation}\label{extropy:11}
\int_{t}^{\infty}2f_X(x)S_X(x)[\lambda_X(x)-\lambda_X(t)]dx\geq0,
\end{equation}
for all $t\geq0$. Moreover, since $X\leq_{lr}Y$, it means that the ratio $f_Y(y)/f_X(y)$ increases with the increase of $y\geq0$. Now, we can readily derive the following inequality for all $t\geq0$ by utilizing Lemma 7.1(i) of \cite{barlow1975statistical} on \eqref{extropy:11}:

\begin{eqnarray*}
% \nonumber % Remove numbering (before each equation)
  A_1(t)&=&\frac{1}{S_Y^2(t)}\int_{t}^{\infty}\frac{2f_Y(y)S_Y(y)\eta(y)}
  {2f_X(y)S_X(y)}2f_X(y)S_X(y)[\lambda_X(y)-\lambda_X(t)]dy \\
  &=&\frac{1}{S_Y^2(t)}\int_{t}^{\infty}\left(\frac{f_Y(y)}{f_X(y)}\right)^2
  2f_X(y)S_X(y)[\lambda_X(y)-\lambda_X(t)]dy\geq0.
\end{eqnarray*}

Since $\eta(t)$ is nonnegative and increasing in $t\geq0$, we can establish the following inequality for all $t\geq0$:
\begin{equation*}
% \nonumber % Remove numbering (before each equation)
  A_2(t)=\int_{t}^{\infty}\frac{2f_Y(y)S_Y(y)}
  {S_Y^2(t)}\lambda_X(t)[\eta(y)-\eta(t)]dy\geq0.
\end{equation*}
Therefore, we can obtain the inequality $\beta(t)\geq\eta(t)\lambda_X(t)$ for all $t\geq0$ and this completes the proof.
\end{proof}
In their work, Toomaj \emph{et al.} \cite{toomaj2023extropy} demonstrated that the property of decreasing uncertainty in a residual lifetime, as measured by REX, is preserved when forming parallel systems. The preservation of the DREX property under the formation of series systems is a question of interest. However, we provide a counterexample to demonstrate that DREX is not preserved under such system formations.
\begin{example}\label{exam:piecewise}\rm
Consider an RV $X$ with a survival function as follows:
\begin{equation}
S(x) =
    \begin{cases}
        1-\frac{x^2}{2}, & \text{if}\ 0\leq x<1\\
        \frac{2}{3}-\frac{x^2}{6}, & \text{if}\ 1\leq x< 2\\
        0, & \text{if}\ x\geq2
    \end{cases}.
\end{equation}
The REX of $X$ can be easily observed as
\begin{equation}
\mathfrak{J}(X;t) =
    \begin{cases}
        \frac{2}{27}\frac{9t^3-16}{(t^2-2)^2}, & \text{if}\ 0\leq t<1\\
        \frac{2}{3}\frac{t^2+2t+4}{(t+2)(t^2-4)}, & \text{if}\ 1\leq t< 2\\
    \end{cases}.
\end{equation}
\begin{figure}
\centering
\includegraphics[width=0.43\textwidth]{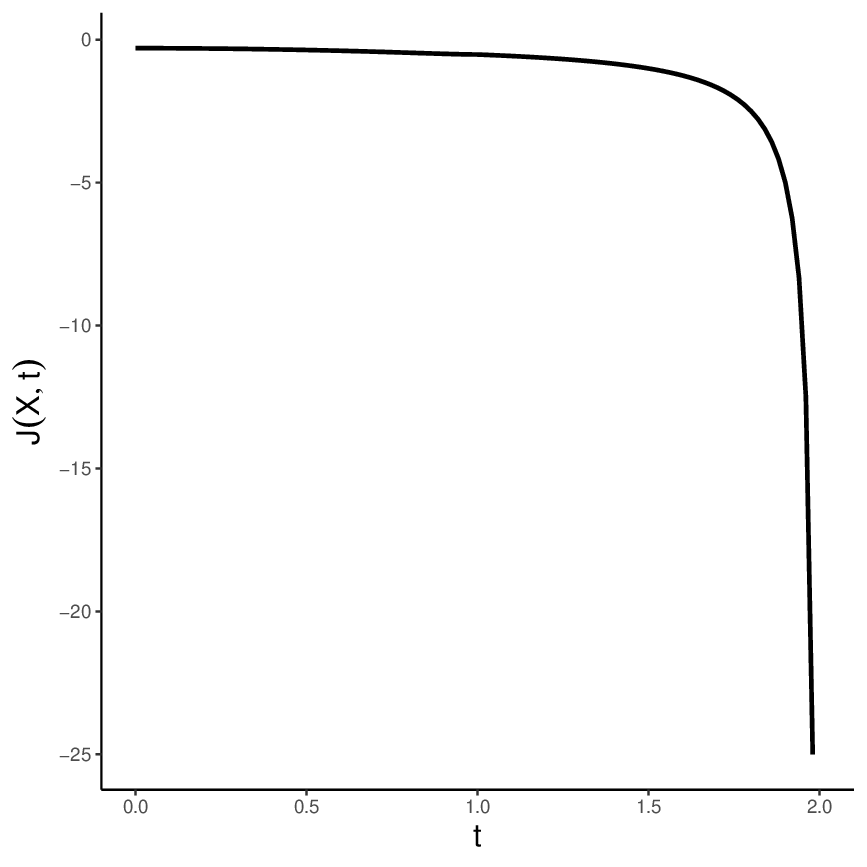}
\includegraphics[width=0.43\textwidth]{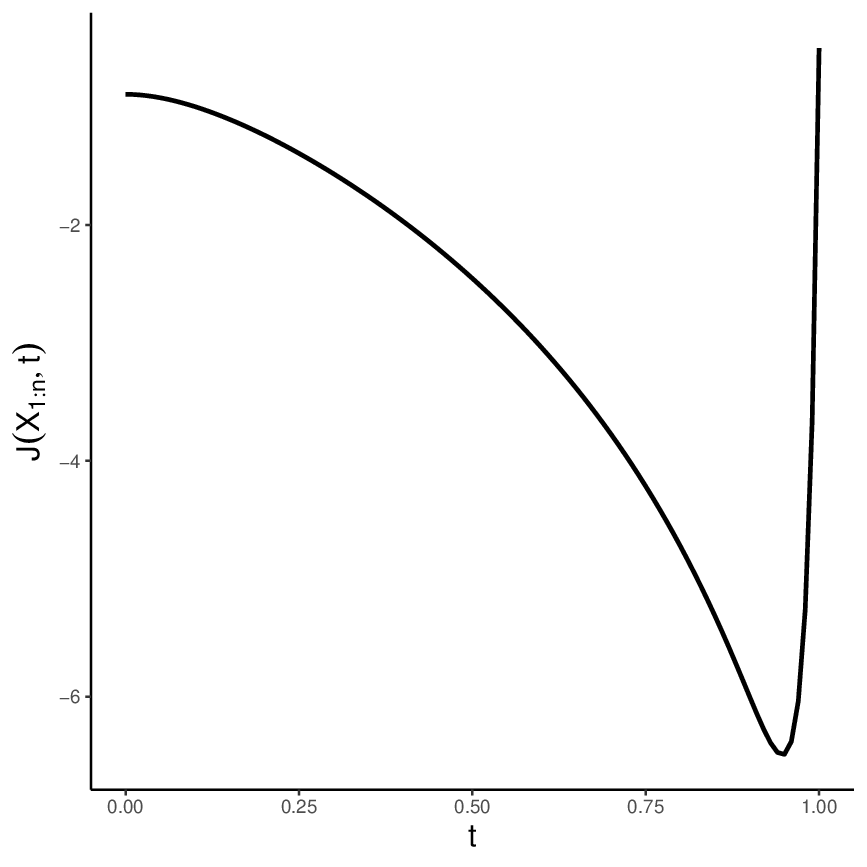}
\caption{Exact values of $\mathfrak{J}(X;t)$ (left panel) and $\mathfrak{J}(X_{1:n};t)$ (right panel) for the example provided in Example \ref{exam:piecewise}, as a function of $t$.} \label{fig:exp}
\end{figure}
By examining Figure \ref{fig:exp}, it becomes evident that $\mathfrak{J}(X;t)$ is decreasing in $t,$ indicating that RV $X$ possesses the DREX property. To analyze the relationship between $\mathfrak{J}(X_{1:n};t)$ and the time $t$, numerical methods are employed due to the challenge of deriving an explicit expression. Specifically, we consider the case of $0<t<1$ and plot $\mathfrak{J}(X_{1:n};t)$ for $n=15,$ as shown in Figure \ref{fig:exp}. Notably, it is evident that $\mathfrak{J}(X_{1:n};t)$ does not exhibit a monotonic behavior concerning $t$. Consequently, we can conclude that $X_{1:n}$ does not possess the DREX property.
\end{example}
\begin{remark}\rm
In Example \ref{exam:piecewise}, it is important to note that the ratio $\lambda_{1:n}(t)/\lambda(t)=n$ exhibits both increasing and decreasing behavior, while the ratio $f_{1:n}(t)/f(x)=nS^{n-1}(t)$ is decreasing with respect to $t$. This implies that $X\geq_{lr}X_{1:n}$. We have determined that the condition $X\leq_{lr}Y$ is essential in Theorem \ref{thm:DREX} and cannot be omitted. In the proof of Theorem 5.4 by Toomaj \emph{et al.} \cite{toomaj2023extropy}, recall that the condition $X\leq_{lr}Y$ in Theorem \ref{thm:DREX} can be loosened to $X\leq_{hr}Y$ (the hazard rate order) by including the condition $\lim_{t\rightarrow\infty}S_Y(t)/S_X(t)<\infty$. It should be noted that this additional condition may not always hold in practice.
\end{remark}
Now, we provide another useful theorem which is given in the next theorem.
\begin{theorem}
If $X_{i:n}$ is DREX, then $X_{i+1:n}$ , $X_{i:n-1}$  and $X_{i+1:n+1}$  are also DREX.
\end{theorem}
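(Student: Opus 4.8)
The plan is to apply Theorem~\ref{thm:DREX} three separate times, taking $X = X_{i:n}$ (which is DREX by hypothesis) and letting $Y$ run over $X_{i+1:n}$, $X_{i:n-1}$, and $X_{i+1:n+1}$. For each target I must verify the two hypotheses of Theorem~\ref{thm:DREX}: (i) $X_{i:n}\leq_{lr}Y$, and (ii) the ratio $\lambda_Y(t)/\lambda_{i:n}(t)$ is increasing in $t\geq 0$. Since all four order statistics are built from the same i.i.d.\ sequence with parent $F$, they share the common hazard rate $\lambda$ and the common increasing function $\phi(t)=F(t)/S(t)$, which is what makes both hypotheses tractable.

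First I would check the likelihood ratio orderings, which fall straight out of $f_{i:n}(t)=i\binom{n}{i}F^{i-1}(t)S^{n-i}(t)f(t)$. Forming the three density ratios yields, up to positive constants, $f_{i+1:n}/f_{i:n}\propto \phi(t)$, $f_{i:n-1}/f_{i:n}\propto 1/S(t)$, and $f_{i+1:n+1}/f_{i:n}\propto F(t)$. Each factor is increasing in $t$, so $X_{i:n}\leq_{lr}Y$ holds in all three cases, and in particular the orderings point in the direction required by Theorem~\ref{thm:DREX}.

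The crux is hypothesis (ii). I would first record that for any sample size $m$ the hazard rate factors as $\lambda_{i:m}(t)=g_{i:m}(\phi(t))\,\lambda(t)$, where
\[
g_{i:m}(x)=\frac{i\binom{m}{i}x^{i-1}}{\sum_{k=0}^{i-1}\binom{m}{k}x^{k}},
\]
which is the specialization of \eqref{hazard:system}--\eqref{rational} to the $(m-i+1)$-out-of-$m$ system whose signature is the $i$-th unit vector. Because $\phi$ is increasing, $\lambda_Y/\lambda_{i:n}=g_Y(\phi)/g_{i:n}(\phi)$ is increasing in $t$ exactly when $g_Y(x)/g_{i:n}(x)$ is increasing in $x>0$. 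After cancellation, each of the three cases collapses to showing that a ratio of two polynomials with nonnegative coefficients, $\sum_k a_k x^k/\sum_k b_k x^k$, is increasing. The clean tool here is the elementary fact that such a ratio is nondecreasing whenever the coefficient ratios $a_k/b_k$ are nondecreasing in $k$; this is seen by writing the ratio as $\mathbb{E}[a_K/b_K]$ for $K$ distributed according to $b_k x^k/\sum_j b_j x^j$, a family that is $TP_2$, hence stochastically increasing, in $x$.

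I expect the main work, and the main obstacle, to be organizing this final step, since the coefficient bookkeeping differs across the three targets. The relevant coefficient ratios turn out to be $\binom{n}{m-1}/\binom{n}{m}=m/(n-m+1)$ for $Y=X_{i+1:n}$, $\binom{n}{k}/\binom{n-1}{k}=n/(n-k)$ for $Y=X_{i:n-1}$, and $\binom{n}{m-1}/\binom{n+1}{m}=m/(n+1)$ for $Y=X_{i+1:n+1}$, each of which is readily checked to be nondecreasing in its index. With (i) and (ii) established in all three cases, Theorem~\ref{thm:DREX} delivers the DREX property of $X_{i+1:n}$, $X_{i:n-1}$, and $X_{i+1:n+1}$, completing the proof.
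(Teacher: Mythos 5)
Your proposal is correct and follows essentially the same route as the paper: both reduce the claim to the two hypotheses of Theorem~\ref{thm:DREX} (the likelihood ratio ordering $X_{i:n}\leq_{lr}Y$ and the monotonicity of $\lambda_Y(t)/\lambda_{i:n}(t)$) checked for the three pairs of order statistics. The only difference is that where the paper cites Nagaraja for the monotonicity of the hazard rate ratio in the three scenarios, you prove it self-containedly via the standard coefficient-ratio criterion for ratios of polynomials with nonnegative coefficients, and your coefficient computations $m/(n-m+1)$, $n/(n-k)$ and $m/(n+1)$ all check out.
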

\begin{proof}
Let $\lambda_{i_1:n_1}(t)$ and $\lambda_{i_2:n_2}(t)$ be the hazard rate functions of order statistics $X_{i_1:n_1}$ and $X_{i_2:n_2},$ respectively. From \eqref{def:survival:Xin}, we have
\begin{equation}\label{eq:rationhazard}
\frac{\lambda_{i_2:n_2}(t)}{\lambda_{i_1:n_1}(t)}\propto\left(\frac{F(t)}{S(t)}\right)^{i_2-i_1}
\frac{\sum_{k=0}^{i_1-1}{n_1\choose k}(F(t)/S(t))^k}{\sum_{k=0}^{i_2-1}{n_2\choose k}(F(t)/S(t))^k}
\end{equation}
Due to Nagaraja \cite{nagaraja1990some}, we can observe that equation \eqref{eq:rationhazard} exhibits an increasing trend in $t$ under the following three scenarios: (i) when $n_1=n_2=n$, $i_1=i$, and $i_2=i+1$; (ii) when $n_1=n$, $n_2=n-1$, and $i_1=i_2=i$; (iii) when $n_1=n$, $n_2=n+1$, $i_1=i$, and $i_2=i+1$. Additionally, it is not hard to see that $X_{i:n}\leq_{lr}X_{i+1:n}$, $X_{i:n}\leq_{lr}X_{i:n-1}$, and $X_{i:n}\leq_{lr}X_{i+1:n+1}$. Consequently, by invoking Theorem \ref{thm:DREX}, we can readily establish the claim.
\end{proof}
As specified before, Toomaj \emph{et al.} \cite{asadi2000residual} demonstrated that the property of decreasing uncertainty in a residual lifetime, as measured by REX, is preserved when forming parallel systems. We can now extend this result to the coherent systems.
\begin{theorem}\label{thm:coherent}
Consider a coherent system with signature $\textbf{s}=(s_1,\cdots,s_n)$ comprising $n$ i.i.d. component lifetimes drawn from a CDF denoted by $F$. Let $T$ represent the lifetime of this system. We assume that $s_1\leq s_2\leq \ldots \leq s_n$, and the function $\psi(x)$ defined in \eqref{rational} is an increasing function for $x>0$. If $X$ exhibits the DREX property, then $T$ also possesses the DREX property.
\end{theorem}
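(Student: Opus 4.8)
The plan is to apply Theorem \ref{thm:DREX} with the role of its ``$X$'' played by the common component lifetime (which is assumed DREX) and the role of ``$Y$'' played by the system lifetime $T$. Two hypotheses must then be verified: that the component lifetime is smaller than $T$ in the likelihood ratio order, $X\leq_{lr}T$, and that the ratio $\lambda_T(t)/\lambda(t)$ is increasing in $t\geq0$.

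The hazard-rate hypothesis is essentially free. From the expression \eqref{hazard:system} for $\lambda_T$ we read off $\lambda_T(t)/\lambda(t)=\psi(\phi(t))$, where $\phi(t)=F(t)/S(t)$ is increasing in $t$ and $\psi$ is the rational function in \eqref{rational}. Since $\psi$ is assumed increasing on $(0,\infty)$, the composition $\psi(\phi(t))$ is increasing in $t$, which is precisely the required monotonicity of $\lambda_T/\lambda$. So this hypothesis of Theorem \ref{thm:DREX} is supplied directly by the standing assumption on $\psi$.

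The substantive step, and the part I expect to be the main obstacle, is the likelihood ratio inequality, i.e.\ showing that $f_T(t)/f(t)$ is increasing in $t$. First I would write $f_T=\sum_{i=1}^n s_i f_{i:n}$ and use the identity $i\binom{n}{i}=n\binom{n-1}{i-1}$ to obtain the representation
\begin{equation*}
\frac{f_T(t)}{f(t)}=\sum_{i=1}^{n}s_i\, i\binom{n}{i}F^{i-1}(t)S^{n-i}(t)=n\sum_{j=0}^{n-1}s_{j+1}\binom{n-1}{j}F^{j}(t)S^{(n-1)-j}(t).
\end{equation*}
Recognizing the summands $\binom{n-1}{j}F^{j}(1-F)^{(n-1)-j}$ as the point masses of a variable $B\sim\mathrm{Binomial}(n-1,F(t))$, this reads $f_T(t)/f(t)=n\,\mathbb{E}[s_{B+1}]$. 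Now the ordered-signature assumption $s_1\leq\cdots\leq s_n$ says exactly that the map $j\mapsto s_{j+1}$ is nondecreasing, while $\mathrm{Binomial}(n-1,p)$ is stochastically increasing in $p$ and $p=F(t)$ is increasing in $t$. Hence $\mathbb{E}[s_{B+1}]$, the expectation of a nondecreasing function of a stochastically increasing random variable, is increasing in $t$, which gives $X\leq_{lr}T$.

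With both hypotheses of Theorem \ref{thm:DREX} verified and $X$ assumed DREX, that theorem immediately yields that $T$ is DREX, completing the argument. The only delicate point is the likelihood ratio step: the binomial-mixture reformulation is what converts the monotone-signature hypothesis into the needed stochastic monotonicity, and I would expect the alternative of differentiating $f_T/f$ directly (checking $P'(u)(1+u)-(n-1)P(u)\geq0$ after substituting $u=\phi(t)$) to be correct but considerably messier.
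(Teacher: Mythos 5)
Your proposal is correct and follows the same route as the paper: both verify the two hypotheses of Theorem \ref{thm:DREX} with $Y=T$, reading off the monotonicity of $\lambda_T(t)/\lambda(t)=\psi(\phi(t))$ from the assumption on $\psi$. The only difference is that where the paper obtains $X\leq_{lr}T$ by citing Lemma 2.1 of Toomaj and Doostparast, you supply a correct self-contained proof via the binomial-mixture representation $f_T(t)/f(t)=n\,\mathbb{E}[s_{B+1}]$ with $B\sim\mathrm{Binomial}(n-1,F(t))$, which is a valid (and arguably cleaner) way to get the same ingredient.
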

\begin{proof}
Let us assume that RV $X$ possesses the DREX property. Considering the assumption $s_1\leq s_2\leq \ldots \leq s_n$, we can establish that $X\leq_{lr}T$ based on Lemma 2.1 of \cite{toomaj2015comparisons}. Additionally, since the function $\psi(x)$ defined in \eqref{rational} is increasing in $x$, we can infer that $\lambda_{T}(t)/\lambda_X(t)$ is increasing in $t$. Therefore, the assumption of Theorem \ref{thm:DREX} is satisfied, enabling us to conclude that $T$ also possesses the DREX property.
\end{proof}
An immediate consequence of Theorem \ref{thm:coherent} can be seen in the following.
\begin{corollary}
If $X$ is DREX, then $X_{n:n}$ is DREX.
\end{corollary}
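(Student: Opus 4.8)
The plan is to realize the maximum order statistic $X_{n:n}$ as the lifetime of a particular coherent system and then invoke Theorem \ref{thm:coherent} directly. The parallel system of $n$ i.i.d.\ components fails only when the last component fails, so its lifetime is exactly $X_{n:n}$, and its signature is $\textbf{s}=(0,0,\ldots,0,1)$, i.e.\ $s_i=0$ for $1\leq i\leq n-1$ and $s_n=1$.

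With this signature I would verify the two hypotheses of Theorem \ref{thm:coherent}. First, the monotonicity $s_1\leq s_2\leq\cdots\leq s_n$ is immediate, since the signature is $(0,\ldots,0,1)$. Second, I must check that the rational function $\psi(x)$ in \eqref{rational} is increasing in $x>0$. Substituting $s_{i+1}=0$ for $i<n-1$ and $s_n=1$, only the $i=n-1$ term survives in the numerator, giving numerator $(n-(n-1)){n\choose n-1}x^{n-1}=n\,x^{n-1}$; in the denominator the inner sum $\sum_{j=i+1}^{n}s_j$ equals $1$ for every $0\leq i\leq n-1$ (since $s_n=1$ is always included), so the denominator is $\sum_{i=0}^{n-1}{n\choose i}x^i$. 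Thus $\psi(x)=n x^{n-1}/\sum_{i=0}^{n-1}{n\choose i}x^i$, and I would confirm this is increasing, e.g.\ by noting $\psi(x)=n/\sum_{i=0}^{n-1}{n\choose i}x^{i-(n-1)}$ where each exponent $i-(n-1)\leq 0$, so the denominator is decreasing in $x$ and hence $\psi$ is increasing.

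With both hypotheses confirmed, Theorem \ref{thm:coherent} applies: if $X$ is DREX, then the system lifetime $T=X_{n:n}$ is DREX, which is the claim. I expect the only nonroutine step to be the verification that $\psi(x)$ is increasing for this signature; everything else is bookkeeping about plugging the parallel-system signature into the general formula. As an alternative that avoids even this computation, one could instead observe that for the parallel system the ratio $\lambda_{n:n}(t)/\lambda_X(t)$ and the likelihood-ratio ordering $X\leq_{lr}X_{n:n}$ follow from standard order-statistics facts (indeed these are the $i_1=i_2$ or extremal cases already used in the preceding theorem on order statistics), so Theorem \ref{thm:DREX} could be invoked directly; but since the corollary is explicitly framed as a consequence of Theorem \ref{thm:coherent}, the signature substitution is the intended route.
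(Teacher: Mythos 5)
Your proposal is correct and follows essentially the same route as the paper: identify $X_{n:n}$ with the parallel system of signature $(0,\ldots,0,1)$, check the monotone-signature hypothesis, and verify that $\psi(x)=nx^{n-1}/\sum_{i=0}^{n-1}{n\choose i}x^i$ is increasing so that Theorem \ref{thm:coherent} applies. Your rewriting of $\psi$ as $n/\sum_{i=0}^{n-1}{n\choose i}x^{i-(n-1)}$ with nonpositive exponents is a clean justification of the monotonicity step that the paper merely asserts as evident.
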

\begin{proof}
The signature of the parallel system is $\mathbf{s}=(0,0,\ldots,0,1)$, which satisfies the assumption of Theorem \ref{thm:coherent}. Furthermore, it is evident that the function
\begin{equation*}
\psi(x)=\frac{nx^{n-1}}{\sum_{i=0}^{n-1}{n\choose i}x^i},
\end{equation*}
exhibits a monotonically increasing behavior with respect to $x$. This establishes the conclusion of the theorem.
\end{proof}
\begin{example}\rm
Consider the lifetime $T$ of a coherent system with signature $\textbf{s}=(0,0,\frac{1}{4},\frac{3}{4})$, consisting of $n=4$ i.i.d. components with a common CDF $F$. The system signature satisfies the assumption of Theorem \ref{thm:coherent}. Furthermore, by examining \eqref{rational}, we can observe that
\begin{equation*}
\psi(x)=\frac{3x^3+3x^2}{3x^3+6x^2+4x+1}.
\end{equation*}
We can easily verify that $\psi'(x)>0$ for all $x>0$. Therefore, by assuming the DREX property of the component lifetime, we can conclude that the lifetime $T$ of the coherent system is also DREX, as stated in Theorem \ref{thm:coherent}.
\end{example}
Now, we investigate the DREX properties of $k$-record values.
\begin{theorem}
If $X$ is DREX, then $U_{n(1)}$ is also DREX.
\end{theorem}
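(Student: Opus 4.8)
The plan is to deduce the result from Theorem \ref{thm:DREX} by taking $Y=U_{n(1)}$, the ordinary $n$-th upper record value (the $k=1$ case of $U_{n(k)}$). Since $X$ is assumed DREX, it suffices to verify the two hypotheses of that theorem: that $X\leq_{lr}U_{n(1)}$, and that the hazard rate ratio $\lambda_{n(1)}(t)/\lambda(t)$ is increasing in $t\geq0$. Throughout I would write $\lambda(t)$ for the hazard rate of $X$ and, as in \eqref{hazrd:krecord}, $\varphi(t)=-\log\overline{F}(t)$, which is increasing on $(0,\infty)$; in the notation of Theorem \ref{thm:DREX} this corresponds to $\lambda_X=\lambda$ and $\lambda_Y=\lambda_{n(1)}$.

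First I would establish the likelihood ratio order. Setting $k=1$ in the density \eqref{PDFkth} gives $f_{n(1)}(x)=\varphi^{n-1}(x)f(x)/\Gamma(n)$, so that
\[
\frac{f_{n(1)}(x)}{f(x)}=\frac{\varphi^{n-1}(x)}{\Gamma(n)} .
\]
Because $\varphi$ is increasing and $n\geq1$, this ratio is increasing in $x$, which is precisely the statement $X\leq_{lr}U_{n(1)}$.

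Next I would treat the hazard rate ratio. Setting $k=1$ in \eqref{hazrd:krecord} yields
\[
\frac{\lambda_{n(1)}(t)}{\lambda(t)}=\frac{\varphi^{n-1}(t)}{\Gamma(n)\sum_{i=0}^{n-1}\varphi^{i}(t)/i!}=g(\varphi(t)),\qquad g(x):=\frac{x^{n-1}}{\Gamma(n)\sum_{i=0}^{n-1}x^{i}/i!} .
\]
Since $\varphi$ is increasing, it is enough to show that $g$ is increasing on $(0,\infty)$. This monotonicity check is the only nontrivial step, and hence the main obstacle, though a mild one. The cleanest route is to examine the reciprocal of $g$: after the substitution $j=n-1-i$,
\[
\frac{1}{g(x)}=\Gamma(n)\sum_{i=0}^{n-1}\frac{x^{\,i-n+1}}{i!}=\sum_{j=0}^{n-1}\frac{(n-1)!}{(n-1-j)!}\,x^{-j},
\]
which is a sum of non-increasing functions of $x$ (constant for $j=0$ and strictly decreasing for $j\geq1$). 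Thus $1/g$ is decreasing, so $g$ is increasing, and consequently $\lambda_{n(1)}(t)/\lambda(t)=g(\varphi(t))$ is increasing in $t$. With both hypotheses of Theorem \ref{thm:DREX} verified, the DREX property of $U_{n(1)}$ follows at once.
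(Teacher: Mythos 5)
Your proof is correct and follows essentially the same route as the paper: both apply Theorem \ref{thm:DREX} with $Y=U_{n(1)}$ after checking that $f_{n(1)}(x)/f(x)=\varphi^{n-1}(x)/\Gamma(n)$ is increasing and that $\lambda_{n(1)}(t)/\lambda(t)$ is increasing in $t$. The only difference is that the paper merely asserts the monotonicity of the hazard-rate ratio ``by examining \eqref{hazrd:krecord}'', whereas you actually verify it via the reciprocal $1/g(x)=\sum_{j=0}^{n-1}\frac{(n-1)!}{(n-1-j)!}x^{-j}$, which is a welcome addition.
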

\begin{proof}
By examining \eqref{hazrd:krecord}, it becomes evident that the function $\lambda_{n(k)}(t)/\lambda(t)$ displays a monotonic increase concerning $t$. Additionally, based on \eqref{PDFkth}, the function
\begin{equation*}
\frac{f_{n(k)}(x)}{f(x)}=\frac{\varphi^{n-1}(t)}{\gamma(n)},
\end{equation*}
is increasing in $t$. This implies that $X\leq_{lr}U_{n(1)}$. Consequently, we can affirm that the assumption of Theorem \ref{thm:DREX} is satisfied, allowing us to conclude that $U_{n(1)}$ also possesses the DREX property.
\end{proof}
\begin{theorem}
If $U_{n(k_1)}$ is DREX, then $U_{n(k_2)}$ is also DREX when $k_1>k_2.$
\end{theorem}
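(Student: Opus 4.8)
The plan is to apply Theorem \ref{thm:DREX} with the roles $X=U_{n(k_1)}$ and $Y=U_{n(k_2)}$, so that the assumed DREX property of $U_{n(k_1)}$ is transferred to $U_{n(k_2)}$. To do so I must verify the two hypotheses of that theorem for $k_1>k_2$: that $U_{n(k_1)}\leq_{lr}U_{n(k_2)}$, and that the ratio $\lambda_{n(k_2)}(t)/\lambda_{n(k_1)}(t)$ is increasing in $t\geq0$.

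First I would settle the likelihood ratio order. From the density in \eqref{PDFkth}, the factor $[-\log\overline{F}(x)]^{n-1}f(x)$ is common to both $k$-record densities, so
\[
\frac{f_{n(k_2)}(x)}{f_{n(k_1)}(x)}=\left(\frac{k_2}{k_1}\right)^{n}[\overline{F}(x)]^{k_2-k_1}.
\]
Since $k_2-k_1<0$ and $\overline{F}$ is decreasing, the quantity $[\overline{F}(x)]^{k_2-k_1}$ is increasing in $x$, which yields $U_{n(k_1)}\leq_{lr}U_{n(k_2)}$, exactly the order $X\leq_{lr}Y$ required by Theorem \ref{thm:DREX}.

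Next I would treat the hazard rate ratio via \eqref{hazrd:krecord}. Setting $u=\varphi(t)=-\log\overline{F}(t)$, which is increasing in $t$, the ratio $\lambda_{n(k_2)}(t)/\lambda_{n(k_1)}(t)$ equals, up to the positive constant $(k_2/k_1)^n$, the quantity $R(u)=P(u)/Q(u)$, where $P(u)=\sum_{i=0}^{n-1}(k_1u)^i/i!$ and $Q(u)=\sum_{i=0}^{n-1}(k_2u)^i/i!$. Because $\varphi$ is increasing, it is enough to prove $R$ is increasing in $u>0$. Writing $a_i=k_1^i/i!$ and $b_i=k_2^i/i!$, I would expand the numerator of $R'$ and regroup the resulting double sum symmetrically into
\[
P'(u)Q(u)-P(u)Q'(u)=\sum_{0\le i<j\le n-1}(i-j)\bigl(a_ib_j-a_jb_i\bigr)u^{i+j-1}.
\]
Since $a_i/b_i=(k_1/k_2)^i$ is increasing in $i$ (as $k_1>k_2$), for $i<j$ one has $a_ib_j-a_jb_i\le0$, so each factor $(i-j)(a_ib_j-a_jb_i)$ is nonnegative; hence $R'\geq0$ and the hazard rate ratio is increasing in $t$.

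The main obstacle is this last monotonicity of the ratio of two truncated-exponential polynomials; the coefficient-ratio (total positivity) argument above is the crux, and the likelihood ratio step is comparatively routine. Once both hypotheses are verified, Theorem \ref{thm:DREX} applies directly and delivers the DREX property of $U_{n(k_2)}$, completing the proof.
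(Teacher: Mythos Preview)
Your proof is correct and follows essentially the same route as the paper: both apply Theorem~\ref{thm:DREX} with $X=U_{n(k_1)}$ and $Y=U_{n(k_2)}$ and verify the same two hypotheses in the same way. The only difference is that the paper invokes Raqab and Amin~\cite{raqab1997note} for the monotonicity of the hazard-rate ratio $\Pi(t)=\lambda_{n(k_2)}(t)/\lambda_{n(k_1)}(t)$, whereas you give a self-contained total-positivity argument for that step.
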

\begin{proof}
Let $U_{n(k_j)},\ j=1,2,$ represent the $n$-th upper $k_j$-record values. The hazard rates of $U_{n(k_1)}$ and $U_{n(k_2)}$ are denoted as $\lambda_{n(k_1)}(t)$ and $\lambda_{n(k_2)}(t)$, respectively. It can be demonstrated that
\begin{equation*}
\lambda_{n(k_2)}(t)=\Pi(t)\lambda_{n(k_1)}(t),
\end{equation*}
where
\begin{equation*}\label{eq:PSI}
\Pi(t)=\left(\frac{k_2}{k_1}\right)^n\frac{\sum_{i=0}^{n-1} \frac{[-k_1\log \overline{F}(t)]^i}{i!}}{\sum_{i=0}^{n-1} \frac{[-k_1\log \overline{F}(t)]^i}{i!}},\ t>0.
\end{equation*}
Raqab and Amin \cite{raqab1997note} demonstrated that when $k_1 > k_2$, the function $\Pi(t)$ exhibits a monotonically increasing behavior with respect to $t.$ Conversely, it can be observed from \eqref{PDFkth} that the function
\begin{equation*}
\frac{f_{n(k_2)}(t)}{f_{n(k_1)}(t)}=\left(\frac{k_2}{k_1}\right)^{n}[\overline{F}(x)]^{k_2-k_1},\ t>0,
\end{equation*}
exhibits a monotonic increase with respect to $t$ when $k_1 > k_2$. This leads to the conclusion of the theorem.
\end{proof}

\section{Results on Past Extropy}
The initial theorem in this section establishes that the IPEX property of a stochastically larger RV in terms of the likelihood ratio order can be maintained by a smaller RV. This theorem serves as the foundation for deriving the subsequent conclusions.
\begin{theorem}\label{thm:IPEX}

Let $X\geq_{lr}Y$ and $\tau_Y(t)/\tau_X(t)$ be decreasing in $t\geq0.$ If $X$ is IPEX, then $Y$ is also IPEX.

\end{theorem}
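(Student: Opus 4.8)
The plan is to mirror the proof of Theorem \ref{thm:DREX}, exploiting the duality between the residual and past settings recorded in \eqref{eq:rex:2} and \eqref{eq:pex:3}. Set $\eta(t)=\tau_Y(t)/\tau_X(t)$, which is decreasing by hypothesis. Using \eqref{eq:pex:3} I would write
\[
-4\widetilde{\mathfrak{J}}(Y;t)=\mathbb{E}[\tau_Y(Y_{22})\mid Y_{22}\leq t]=\mathbb{E}[\eta(Y_{22})\tau_X(Y_{22})\mid Y_{22}\leq t]=:\gamma(t),
\]
so that $Y$ is IPEX if and only if $\gamma(t)$ is decreasing in $t$. Differentiating the quotient $\gamma(t)=F_Y^{-2}(t)\int_0^t\eta(y)\tau_X(y)\,2f_Y(y)F_Y(y)\,dy$ gives
\[
\gamma'(t)=2\tau_Y(t)\bigl[\eta(t)\tau_X(t)-\gamma(t)\bigr],
\]
so it suffices to prove $\gamma(t)\geq\eta(t)\tau_X(t)$ for all $t\geq0$.

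Since $\int_0^t 2f_Y(y)F_Y(y)\,dy=F_Y^2(t)$, the constant $\eta(t)\tau_X(t)$ can be written as an average against the density $2f_Y(y)F_Y(y)/F_Y^2(t)$ on $(0,t]$, and I would split
\[
\gamma(t)-\eta(t)\tau_X(t)=B_1(t)+B_2(t),
\]
where
\[
B_1(t)=\int_0^t\frac{2f_Y(y)F_Y(y)}{F_Y^2(t)}\,\eta(y)\bigl[\tau_X(y)-\tau_X(t)\bigr]\,dy,\qquad B_2(t)=\int_0^t\frac{2f_Y(y)F_Y(y)}{F_Y^2(t)}\,\tau_X(t)\bigl[\eta(y)-\eta(t)\bigr]\,dy.
\]
The term $B_2(t)$ is immediate: on the range of integration $y\leq t$, and since $\eta$ is decreasing we have $\eta(y)-\eta(t)\geq0$, while the remaining factors are nonnegative, whence $B_2(t)\geq0$.

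The real work is the term $B_1(t)$, and here I would import the hypothesis that $X$ is IPEX. Arguing as above for $X$ (with $\eta\equiv1$), IPEX of $X$ is equivalent to $\delta(t)=\mathbb{E}[\tau_X(X_{22})\mid X_{22}\leq t]$ being decreasing, and $\delta'(t)=2\tau_X(t)[\tau_X(t)-\delta(t)]$ then forces $\delta(t)\geq\tau_X(t)$, that is
\[
\int_0^t 2f_X(y)F_X(y)\bigl[\tau_X(y)-\tau_X(t)\bigr]\,dy\geq0,\qquad t\geq0.
\]
Using $\eta(y)=f_Y(y)F_X(y)/\bigl(F_Y(y)f_X(y)\bigr)$ I would rewrite the kernel of $B_1$ as $2f_Y(y)F_Y(y)\,\eta(y)=\bigl(f_Y(y)/f_X(y)\bigr)^2\,2f_X(y)F_X(y)$, so that
\[
B_1(t)=\frac{1}{F_Y^2(t)}\int_0^t\left(\frac{f_Y(y)}{f_X(y)}\right)^2 2f_X(y)F_X(y)\bigl[\tau_X(y)-\tau_X(t)\bigr]\,dy.
\]
Because $X\geq_{lr}Y$, the weight $(f_Y/f_X)^2$ is decreasing in $y$, and the displayed integral inequality for $X$ together with Lemma 7.1(i) of \cite{barlow1975statistical} should yield $B_1(t)\geq0$, completing the argument. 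The step I expect to be most delicate is precisely this last one: I must invoke the monotone-weight (sign-variation) lemma in the past-lifetime orientation, where the integration runs over $(0,t]$ and the admissible weight is \emph{decreasing} rather than increasing as in Theorem \ref{thm:DREX}; checking that the hypotheses of Lemma 7.1(i) are met in this dual orientation, and that the behaviour at the endpoint $y=0$ causes no trouble, is the crux, whereas the decomposition and the sign of $B_2$ are routine.
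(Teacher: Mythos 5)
Your proposal mirrors the paper's own argument step for step: the same reduction to showing $\gamma(t)\geq\eta(t)\tau_X(t)$ via $\gamma'(t)=2\tau_Y(t)[\eta(t)\tau_X(t)-\gamma(t)]$, the same decomposition into $B_1,B_2$ (the paper's $A^\star_1,A^\star_2$), the same inequality $\int_0^t 2f_XF_X[\tau_X(y)-\tau_X(t)]\,dy\geq0$ extracted from the IPEX property of $X$, and the same rewriting of the kernel as $\left(f_Y/f_X\right)^2 2f_XF_X$. The only discrepancy is the label on the Barlow--Proschan lemma: the variant you actually need (nonnegative \emph{decreasing} weight, partial integrals over $(0,t]$) is part (ii), not part (i) --- precisely the dual orientation you flagged as the delicate point, and the part the paper invokes.
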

\begin{proof}
Let $\delta(t)=\frac{\tau_Y(t)}{\tau_X(t)}$. By utilizing \eqref{eq:pex:3}, we can deduce that
\begin{eqnarray*}
% \nonumber % Remove numbering (before each equation)
  -4\widetilde{\mathfrak{J}}(Y;t)&=&\mathbb{E}[\tau_Y(Y_{22})|Y_{22}\leq t] \\
  &=&\mathbb{E}[\delta(Y_{22})\tau_X(Y_{22})|Y_{22}\leq t].
\end{eqnarray*}
Hence, the objective is to demonstrate that $\beta(t)=\mathbb{E}[\delta(Y_{22})\tau_X(Y_{22})|Y_{22}\leq t]$ is a decreasing function of $t\geq0$. After performing some manipulations, we obtain the following expression
\begin{eqnarray*}
% \nonumber % Remove numbering (before each equation)
  \beta'(t)&=&\left(\frac{\int_{0}^{t}\delta(y)\tau_X(y)2f_Y(x)F_Y(y)dy}
  {F_Y^2(t)}\right)' \\
  &=&2\tau_Y(t)[\delta(t)\tau_X(t)-\beta(t)].
\end{eqnarray*}
Therefore, it is sufficient to prove that $\beta(t)\geq\delta(t)\tau_X(t)$ for all $t\geq0$. For any $t\geq0$, we have the following inequality
\begin{eqnarray*}
% \nonumber % Remove numbering (before each equation)
  &&\beta(t)-\delta(t)\tau_X(t)\\
  &=&\int_{0}^{t}\frac{2f_Y(y)F_Y(y)}
  {F_Y^2(t)}[\delta(y)\tau_X(y)-\delta(t)\tau_X(t)]dy \\
  &=&\int_{0}^{t}\frac{2f_Y(y)F_Y(y)}
  {F_Y^2(t)}\delta(y)[\tau_X(y)-\tau_X(t)]dy \\
  &+&\int_{0}^{t}\frac{2f_Y(y)F_Y(y)}
  {F_Y^2(t)}\tau_X(t)[\delta(y)-\delta(t)]dy\\
  &=&A^\star_1(t)+A^\star_2(t).
\end{eqnarray*}
Let $\alpha^\star(t)=\mathbb{E}[\tau_X(X_{22})|X_{22}>t]$. According to the assumption, $\widetilde{\mathfrak{J}}(X;t)$ is decreasing for $t\geq0$. Consequently, $\alpha^\star(t)$ is increasing for $t\geq0$. It should be noted that $\alpha^{\star'}(t)=2\tau_X(t)(\tau_X(t)-\alpha^\star(t))$, and we have $\alpha^\star(t)=\tau_X(t)$ for all $t\geq0$. In other words,
\begin{equation}\label{extropy:12}
\int_{0}^{t}2f_X(x)S_X(x)[\tau_X(x)-\tau_X(t)]dx\geq0,
\end{equation}
for all $t\geq0.$ Furthermore, if $X\geq_{lr}Y$, it implies that the ratio $f_Y(y)/f_X(y)$ is a decreasing function for $y\geq0$. By applying Lemma 7.1(ii) from Barlow and Proschan \cite{barlow1975statistical} to \eqref{extropy:12}, we can directly deduce the following inequality for all $t\geq0$:
\begin{eqnarray*}
% \nonumber % Remove numbering (before each equation)
  A^\star_1(t)&=&\frac{1}{F_Y^2(t)}\int_{0}^{t}\frac{2f_Y(y)F_Y(y)\delta(y)}
  {2f(y)F_X(y)}2f_X(y)F_X(y)[\tau_X(y)-\tau_X(t)]dy \\
  &=&\frac{1}{F_Y^2(t)}\int_{0}^{t}\left(\frac{f_Y(y)}{f_X(y)}\right)^2
  2f_X(y)F_X(y)[\tau_X(y)-\tau_X(t)]dy\geq0.
\end{eqnarray*}
On the contrary, given that $\delta(x)$ is nonnegative and decreasing, we can conclude that for all $t\geq0$, the following inequality holds:
\begin{equation*}
% \nonumber % Remove numbering (before each equation)
  A^\star_2(t)=\int_{0}^{t}\frac{2f_Y(y)F_Y(y)}
  {F_Y^2(t)}\tau_X(t)[\delta(y)-\delta(t)]dy\geq0.
\end{equation*}
Based on the previous arguments, we can conclude that $\beta(t)\geq\delta(t)\tau_X(t)$ for all $t\geq0$.
\end{proof}
Let us consider the following useful theorem for coherent systems. Recall that the reversed hazard rate function of $T$ can be expressed as
\begin{equation}\label{hazard:system}
\tau_{T}(t)=\frac{f_T(t)}{F_T(t)}=\frac{\sum_{i=1}^{n}is_{i}{n\choose i}(\phi(t))^{i}}{\sum_{i=1}^{n}\left(\sum_{j=1}^{i}s_j\right){n\choose
i}(\phi(t))^i}\tau(t),
\end{equation}
where $\phi(t)=\frac{F(t)}{S(t)}$ is an increasing function of $t\in (0,\infty)$ and $\tau(t)=\frac{f(t)}{F(t)}$ represents the reversed hazard rate function of $X.$ It is clear that the function $\tau_{T}(t)/\tau(t)$ is increasing in $t$ if the rational function
\begin{equation}\label{rational2}
\widetilde{\psi}(x)=\frac{\sum_{i=1}^{n}is_{i}{n\choose i}x^{i}}{\sum_{i=1}^{n}\left(\sum_{j=1}^{i}s_j\right){n\choose
i}x^i},
\end{equation}
is increasing with respect to $x\in (0,\infty).$ We now present the next theorem, the proof of which is omitted as it follows a similar structure to that of Theorem \ref{thm:coherent}.
\begin{theorem}\label{thm:coherent2}
Under the condition of Theorem \ref{thm:coherent}, assume that $s_1\geq s_2\geq \ldots \geq s_n$, and the function $\widetilde{\psi}(x)$ defined in equation \eqref{rational2} is a decreasing function for $x>0$. If $X$ exhibits the IPEX property, then $T$ also possesses the IPEX property.
\end{theorem}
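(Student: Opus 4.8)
The plan is to mirror the argument used for Theorem~\ref{thm:coherent}, but to invoke the past-extropy transfer result, Theorem~\ref{thm:IPEX}, in place of its residual-extropy counterpart Theorem~\ref{thm:DREX}. Recall that Theorem~\ref{thm:IPEX} guarantees that the IPEX property descends from a likelihood-ratio-larger variable to a smaller one provided two hypotheses hold: the ordering $X\geq_{lr}T$ and the monotonicity of the reversed-hazard ratio, namely that $\tau_T(t)/\tau_X(t)$ is decreasing in $t\geq0$. Thus, taking $Y=T$, the whole proof reduces to verifying these two conditions from the stated assumptions $s_1\geq s_2\geq\cdots\geq s_n$ and the decreasingness of $\widetilde{\psi}$.

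First I would establish the likelihood-ratio comparison. Note that the sign of the signature ordering is now reversed relative to Theorem~\ref{thm:coherent}: there the assumption $s_1\leq\cdots\leq s_n$ yielded $X\leq_{lr}T$, whereas here the dual assumption $s_1\geq\cdots\geq s_n$ should, by the same Lemma~2.1 of \cite{toomaj2015comparisons}, yield the reversed ordering $X\geq_{lr}T$. Intuitively, a signature weighted toward early component failures makes the system fail sooner, so $T$ is stochastically (indeed likelihood-ratio) smaller than a single component lifetime. This is exactly the $X\geq_{lr}Y$ requirement of Theorem~\ref{thm:IPEX} with $Y=T$.

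Next I would check the reversed-hazard-ratio condition. From the representation preceding the theorem, $\tau_T(t)/\tau(t)=\widetilde{\psi}(\phi(t))$, where $\phi(t)=F(t)/S(t)$ is increasing in $t$ and $\widetilde{\psi}$ is the rational function of \eqref{rational2}. By hypothesis $\widetilde{\psi}$ is decreasing, so the composition $\widetilde{\psi}(\phi(t))$ is decreasing in $t$; that is, $\tau_T(t)/\tau_X(t)$ is decreasing in $t\geq0$, as required. With both hypotheses of Theorem~\ref{thm:IPEX} in force and $X$ assumed IPEX, that theorem immediately delivers the IPEX property for $T$, completing the argument.

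The main obstacle, I expect, is pinning down the correct direction of the likelihood-ratio comparison: one must be careful that the reversal $s_1\geq\cdots\geq s_n$ genuinely flips the conclusion of Theorem~\ref{thm:coherent} to $X\geq_{lr}T$ rather than leaving it unchanged, and that the cited Lemma~2.1 of \cite{toomaj2015comparisons} is applicable in this reversed form. Everything else is a routine composition-of-monotone-functions argument together with a direct appeal to Theorem~\ref{thm:IPEX}.
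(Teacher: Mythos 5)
Your proof is correct and is essentially the argument the paper intends: the paper omits the proof of Theorem~\ref{thm:coherent2}, stating only that it follows the structure of Theorem~\ref{thm:coherent}, and your reconstruction—verifying $X\geq_{lr}T$ from $s_1\geq\cdots\geq s_n$ via the dual of Lemma~2.1 of \cite{toomaj2015comparisons}, noting $\tau_T(t)/\tau_X(t)=\widetilde{\psi}(\phi(t))$ is decreasing as the composition of a decreasing $\widetilde{\psi}$ with the increasing $\phi$, and then applying Theorem~\ref{thm:IPEX} with $Y=T$—is exactly that structure.
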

An immediate consequence of Theorem \ref{thm:coherent2} can be seen in the following.
\begin{theorem}
If $X$ is IPEX, then $X_{1:n}$ is also IPEX.
\end{theorem}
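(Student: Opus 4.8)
The plan is to recognize $X_{1:n}$ as the lifetime of a series system and then simply verify that the two hypotheses of Theorem \ref{thm:coherent2} are met, so that the IPEX conclusion transfers from $X$ to $X_{1:n}$ at once. A series system fails at the first component failure, so its signature is $\mathbf{s}=(1,0,\ldots,0)$, with $s_1=1$ and $s_i=0$ for $i\geq 2$. First I would observe that this signature trivially satisfies the ordering requirement $s_1\geq s_2\geq\cdots\geq s_n$ demanded by Theorem \ref{thm:coherent2}, since $1\geq 0\geq\cdots\geq 0$.

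Next I would substitute $\mathbf{s}=(1,0,\ldots,0)$ into the rational function $\widetilde{\psi}(x)$ defined in \eqref{rational2}. In the numerator only the $i=1$ term survives, leaving $n x$, while in the denominator the inner sum $\sum_{j=1}^{i}s_j$ equals $1$ for every $i\geq 1$, so the denominator collapses to $\sum_{i=1}^{n}{n\choose i}x^i=(1+x)^n-1$. This gives the explicit form
\[
\widetilde{\psi}(x)=\frac{nx}{(1+x)^n-1},\qquad x>0.
\]

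The one nonroutine step is to show that this $\widetilde{\psi}$ is decreasing on $(0,\infty)$ when $n\geq 2$ (the case $n=1$ being trivial since $X_{1:1}=X$). Writing $g(x)=(1+x)^n-1>0$, the sign of $\widetilde{\psi}'(x)$ is governed by $g(x)-x\,g'(x)=(1+x)^n-1-nx(1+x)^{n-1}$. I would denote this quantity $h(x)$, note that $h(0)=0$, and compute $h'(x)=-n(n-1)x(1+x)^{n-2}$, which is strictly negative for $n\geq 2$ and $x>0$. Hence $h(x)<0$ throughout $(0,\infty)$, so $\widetilde{\psi}'(x)<0$ there and $\widetilde{\psi}$ is decreasing, as needed; equivalently this is just the elementary inequality $(1+x)^n-1\leq nx(1+x)^{n-1}$.

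With both hypotheses of Theorem \ref{thm:coherent2} in place, the theorem applies directly and yields that $T=X_{1:n}$ inherits the IPEX property from $X$, completing the argument. I do not expect any genuine obstacle: the structure mirrors the parallel-system corollary for the DREX case, and the only computation of substance is the short derivative check establishing the monotonicity of $\widetilde{\psi}$.
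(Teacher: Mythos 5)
Your proposal is correct and follows essentially the same route as the paper: identify $X_{1:n}$ as the series system with signature $\mathbf{s}=(1,0,\ldots,0)$, check the monotone-signature condition, and verify that $\widetilde{\psi}(x)=nx/\bigl((1+x)^n-1\bigr)$ is decreasing so that Theorem \ref{thm:coherent2} applies. The only difference is that you supply the explicit derivative computation for the monotonicity of $\widetilde{\psi}$, which the paper asserts as ``easy to see.''
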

\begin{proof}
The signature of the series system is $\mathbf{s}=(1,0,\ldots,0)$, which satisfies the assumption of Theorem \ref{thm:coherent2}. Furthermore, it is easy to see that the function
\begin{equation*}
\widetilde{\psi}(x)=\frac{nx}{\sum_{i=1}^{n}{n\choose i}x^i},
\end{equation*}
is a decreasing function of $x$. This establishes the conclusion of the theorem.
\end{proof}
\begin{example}\rm
Consider the lifetime $T$ of a coherent system with signature $\textbf{s}=(\frac{1}{2},\frac{1}{2},0,0)$, consisting of $n=4$ i.i.d. components with a common CDF $F$. The system signature satisfies the assumption of Theorem \ref{thm:coherent2}. Furthermore, by examining \eqref{rational}, we can observe that
\begin{equation*}
\widetilde{\psi}(x)=\frac{6x^2+2x}{x^4+4x^3+6x^2+2x}.
\end{equation*}
\begin{figure}
\centering
\includegraphics[width=0.43\textwidth]{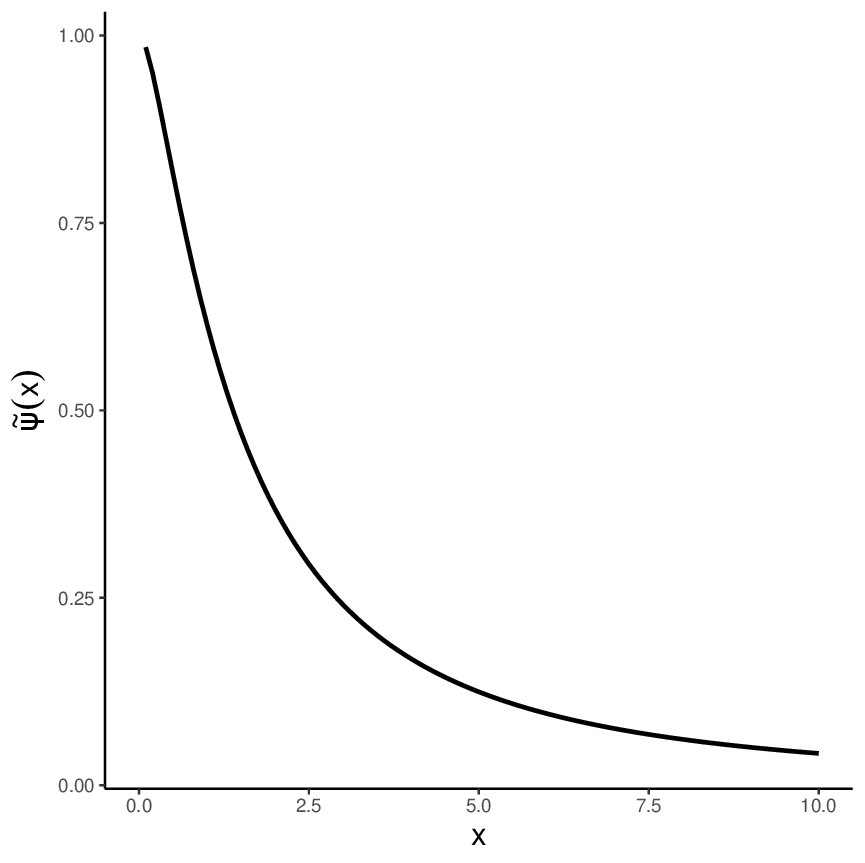}
\caption{The plot of the function $\widetilde{\psi}(x)$.} \label{fig:psitilde}
\end{figure}
Figure \ref{fig:psitilde} shows that the function $\widetilde{\psi}(x)$ is a decreasing function of $x.$ Therefore, by assuming the IPEX property of the component lifetime, we can conclude that the lifetime $T$ of the coherent system is also IPEX, as stated in Theorem \ref{thm:coherent2}.
\end{example}
%
%---------------------------------------------------------------------------------------
\section{Nonparametric Estimators}
%---------------------------------------------------------------------------------------
In this section, we introduce a nonparametric approach to estimating the residual extropy and past extropy as defined in Eqs. \eqref{def:extropy:time} and \eqref{eq:pex}. Let us assume we have a sequence of $n$ i.i.d. RVs ${X_i;1\leq i\leq n}$ with DF $f(x)$, CDF $F(x)$, and survival function $S(x)$. The kernel density estimator of the DF $f(x)$ was proposed by Silverman \cite{silverman2018density} as:
\begin{equation*}
f_n(x)=\frac{1}{nh_n}\sum_{i=1}^{n}K\left(\frac{x-X_i}{h_n}\right),\ x\in R,
\end{equation*}
where $h_n$ is a bandwidth or smoothing parameter and $K(\cdot)$ is a kernel function. The bandwidth sequence $h_n$ is chosen such that $h_n \to 0$ and $nh_n \to \infty$ as $n \to \infty$. This ensures that the kernel density estimator $f_n(x)$ converges to the true DF $f(x)$ as the sample size increases. The kernel function $K(\cdot)$ is a symmetric DF with finite variance. Some commonly used kernel functions include the normal (Gaussian), Epanechnikov, and tricube kernels. Hereafter, we employ the normal kernel function. With the kernel density estimator $f_n(x)$ in hand, we can now define nonparametric kernel-based estimators for the residual extropy $\mathfrak{J}(X;t)$ and the past extropy $\widetilde{\mathfrak{J}}(X;t)$ as:
\begin{equation}\label{estimate:resi}
\mathfrak{J}_n(X;t)=-\frac{1}{2}\int_{t}^{\infty}\left(\frac{f_n(u)}{S_n(t)}\right)^2du,
\end{equation}
\begin{equation}\label{estimate:past}
\widetilde{\mathfrak{J}}_n(X;t)=-\frac{1}{2}\int_{0}^t \left[\frac{f_n(u)}{F_n(t)}\right]^2du,
\end{equation}
where $F_n(t)=\int_{0}^{t}f_n(x)dx$ and $S_n(t)=\int_{t}^{\infty}f_n(x)dx.$  In the following, we outline the important properties of the nonparametric estimators ${f}_n(u)$ and ${S}_n(t)$ as follows:
\begin{equation}\label{Bias:f}
Bias(f_n(u))\simeq\frac{h_n^sc_s}{s!}f^{(s)}(u)\quad\text{and}\quad Var(f_n(u))\simeq\frac{C_k}{nh_n}f(u),
\end{equation}
and
\begin{equation}\label{Bias:S}
Bias(S_n(t))\simeq\frac{h_n^sc_s}{s!}\int_{t}^{\infty}f^{(s)}(u)du\quad\text{and}\quad Var(S_n(u))\simeq\frac{1}{nh_n}C_kS(t),
\end{equation}
where $c_s=\int_{-\infty}^{\infty}u^sK(u)du,\ C_k=\int_{-\infty}^{\infty}K^2(u)du$ and $f^{(s)}(u)$ is the $s$th derivative of $f$ with respect to $u.$ The next theorem investigates the consistency of the estimators defined in \eqref{estimate:resi} and \eqref{estimate:past}.
\begin{theorem}
The nonparametric kernel estimators $\mathfrak{J}_n(X;t)$ and $\widetilde{\mathfrak{J}}_n(X;t)$ are consistent estimators of $\mathfrak{J}(X;t)$ and $\widetilde{\mathfrak{J}}(X;t)$ respectively.
\end{theorem}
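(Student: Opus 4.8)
The plan is to establish consistency by combining the pointwise consistency of the kernel ingredients $f_n$ and $S_n$ (respectively $F_n$) with a bias--variance argument, and then lifting this to the nonlinear functionals $\mathfrak{J}_n(X;t)$ and $\widetilde{\mathfrak{J}}_n(X;t)$. First I would treat the residual case. From the bias and variance expressions in \eqref{Bias:f}, under the standing bandwidth conditions $h_n\to0$ and $nh_n\to\infty$, both $Bias(f_n(u))$ and $Var(f_n(u))$ tend to $0$, so the mean squared error of $f_n(u)$ vanishes and $f_n(u)$ is a consistent estimator of $f(u)$ for each fixed $u$. The same reasoning applied to \eqref{Bias:S} shows that $S_n(t)$ is consistent for $S(t)$, and integrating the bias and variance of $f_n$ over $[0,t]$ yields consistency of $F_n(t)$ for $F(t)$.

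Next I would pass from the ingredients to the integrand appearing in \eqref{estimate:resi}. Since $S(t)>0$ for the relevant $t$, the map $(a,b)\mapsto (a/b)^2$ is continuous at the point $(f(u),S(t))$, so by a Slutsky/continuous-mapping argument the pointwise integrand $\left(f_n(u)/S_n(t)\right)^2$ converges in probability to $\left(f(u)/S(t)\right)^2$ for each $u\ge t$. The essential final step is to interchange this limit with the integral $\int_t^\infty(\cdot)\,du$, so that $\mathfrak{J}_n(X;t)$ converges to $\mathfrak{J}(X;t)$. I would justify this by exhibiting an integrable dominating function, or equivalently by establishing uniform integrability of the family of integrands, and then invoking a dominated-convergence-type theorem; taking expectations and letting $n\to\infty$ then delivers the claimed convergence. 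The past-extropy case is handled identically, replacing $S_n(t)$ by $F_n(t)$ and integrating over the finite interval $[0,t]$ in \eqref{estimate:past}, where the bounded domain of integration actually simplifies the domination step.

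The main obstacle I anticipate is precisely the interchange of limit and integral for the residual extropy, because the domain $[t,\infty)$ is unbounded and one must control the tail contribution uniformly in $n$; a naive pointwise argument does not by itself guarantee convergence of the integral. Exhibiting a suitable integrable envelope for $\left(f_n(u)/S_n(t)\right)^2$ (or proving uniform integrability), together with ensuring that $S_n(t)$ stays bounded away from $0$ with high probability so that the denominator does not degenerate, is where the real care is required. Once that is in place, the bias--variance computations reduce the problem to the facts already recorded in \eqref{Bias:f} and \eqref{Bias:S}, and the conclusion follows.
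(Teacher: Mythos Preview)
Your route differs from the paper's in a way that matters. The paper does \emph{not} work pointwise with the integrand $(f_n(u)/S_n(t))^2$ and then try to push the limit through $\int_t^\infty$. Instead it treats the numerator $N_n:=\int_t^\infty f_n^2(u)\,du$ and the denominator $D_n:=S_n^2(t)$ as two scalar statistics, linearizes each by a first--order Taylor expansion,
\[
N_n\simeq \int_t^\infty f^2(u)\,du+2\int_t^\infty f(u)\bigl(f_n(u)-f(u)\bigr)\,du,\qquad
D_n\simeq S^2(t)+2S(t)\bigl(S_n(t)-S(t)\bigr),
\]
and then reads off the bias and variance of the linear pieces directly from \eqref{Bias:f} and \eqref{Bias:S}. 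Because the integration now sits inside deterministic bias/variance formulas rather than outside a probabilistic limit, no dominated--convergence or uniform--integrability argument is needed; one just checks $\operatorname{MSE}(N_n)\to0$ and $\operatorname{MSE}(D_n)\to0$ and applies Slutsky to the ratio $N_n/D_n$. This neatly sidesteps exactly the obstacle you flagged about the unbounded domain.

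Your plan is not wrong in principle, but the step you single out---exhibiting an integrable envelope for $(f_n(u)/S_n(t))^2$ on $[t,\infty)$ uniformly in $n$, together with keeping $S_n(t)$ bounded away from $0$---is the whole difficulty, and you have not actually carried it out. Without extra tail assumptions on $f$ and $K$ that envelope is not obvious, so as written the proposal has a genuine gap at the interchange step. The paper's numerator/denominator decomposition is the cleaner and more standard device here; if you adopt it, your bias--variance reasoning from \eqref{Bias:f}--\eqref{Bias:S} plugs in directly and the proof closes.
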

\begin{proof}
Applying the Taylor series approximation, we can derive the following expressions
\begin{eqnarray}
% \nonumber % Remove numbering (before each equation)
  \int_{t}^{\infty}f^2_n(u)du&\simeq&\int_{t}^{\infty}f^2(u)du+2\int_{t}^{\infty}f(u)(f_n(u)-f(u))du
  \label{eq:111} \\
  S^2_n(t)&\simeq&S^2(t)+2S(t)(S_n(t)-S(t))\label{eq:222}.
\end{eqnarray}
Employing the previously established results, we can now determine the bias and variance expressions for the nonparametric estimator $\int_{t}^{\infty}f^2_n(u)du$ given by
\[
Bias\left(\int_{t}^{\infty}f^2_n(u)du\right)\simeq2\int_{t}^{\infty}f(u)Bias(f_n(u))du,
\]
and
\[
Var\left(\int_{t}^{\infty}f^2_n(u)du\right)\simeq4\int_{t}^{\infty}f^2(u)Var(f_n(u))du.
\]
Building upon the preceding analysis, we can also characterize the bias and variance of the nonparametric survival function estimator $S_n^2(t)$ as
\[
Bias(S_n^2(t))\simeq 2S(t)Bias(S_n(t))\quad \text{and}\quad Var(S_n^2(t))\simeq4S^2(t)Var(S_n(t)).
\]
Incorporating the bias expressions from Eqs. \eqref{Bias:f} and \eqref{Bias:S}, and leveraging the asymptotic properties that $h_n \to 0$ and $nh_n \to \infty$ as $n \to \infty$, we can establish that the bias and variance of the nonparametric estimators $\int_{t}^{\infty}f^2_n(u)du$ and $S_n^2(t)$ both converge to 0 as the sample size $n$ approaches infinity. Consequently, as $n\rightarrow\infty,$ we can conclude that
\[
MSE\left(\int_{t}^{\infty}f^2_n(u)du\right)\rightarrow0\quad \text{and}\quad MSE(S_n^2(t))\rightarrow0.
\]
Therefore, $\int_{t}^{\infty}f^2_n(u)du\rightarrow\int_{t}^{\infty}f^2(u)du$ and $S_n^2(t)\rightarrow S^2(t)$, as $n\rightarrow\infty.$ Invoking Slutsky's theorem, we can obtain $\mathfrak{J}_n(X;t)\stackrel{p}{\rightarrow}\mathfrak{J}_n(X;t)$ as $n\rightarrow\infty.$ This means that $\mathfrak{J}_n(X;t)$ is a consistent estimator of $\mathfrak{J}(X;t)$. Similarly, we can show that the nonparametric estimator $\widetilde{\mathfrak{J}}_n(X;t)$  is a consistent estimator of the true past extropy $\widetilde{\mathfrak{J}}(X;t).$
\end{proof}
In the upcoming theorem, we establish that the estimators $\mathfrak{J}_n(X;t)$ and $\widetilde{\mathfrak{J}}_n(X;t)$ are asymptotically normally distributed under appropriate conditions.
\begin{theorem}
Let $\mathfrak{J}_n(X;t)$ and $\widetilde{\mathfrak{J}}_n(X;t)$ be the nonparametric kernel estimators of  $\mathfrak{J}(X;t)$ and $\widetilde{\mathfrak{J}}(X;t)$ respectively. Then for fixed $t,$ both
\[
\sqrt{nh_n}\left(\frac{\mathfrak{J}_n(X;t)-\mathfrak{J}(X;t)}{\sigma_{\mathfrak{J}_n}}\right)\quad \text{and}\quad \sqrt{nh_n}\left(\frac{\widetilde{\mathfrak{J}}_n(X;t)-\widetilde{\mathfrak{J}}(X;t)}{\sigma_{\widetilde{\mathfrak{J}}_n}}\right)
\]
has a standard normal distribution as $n\rightarrow\infty$ with
\[
\sigma^2_{\mathfrak{J}_n}=\frac{C_k}{S^4(t)}\left[\int_{t}^{\infty}f^3(x)dx+
\frac{\left(\int_{t}^{\infty}f^2(x)dx\right)^2}{S(t)}\right],
\]
and
\[
\sigma^2_{\widetilde{\mathfrak{J}}_n}=\frac{C_k}{F^4(t)}\left[\int_{0}^{t}f^3(x)dx+
\frac{\left(\int_{0}^{t}f^2(x)dx\right)^2}{F(t)}\right],
\]
where $C_k$ is as defined in (13).
\end{theorem}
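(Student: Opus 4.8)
The plan is to represent each estimator as a smooth function of two elementary kernel-based statistics and then combine a delta-method linearization with a triangular-array central limit theorem. Focusing on $\mathfrak{J}_n(X;t)$, I would write $\mathfrak{J}_n(X;t)=-\tfrac12\, I_n/S_n^2(t)$, where $I_n=\int_t^\infty f_n^2(u)\,du$ and $S_n(t)=\int_t^\infty f_n(u)\,du$, and set $I=\int_t^\infty f^2(u)\,du$ and $S=S(t)$. The first step is the Taylor (delta-method) expansion already used in the consistency proof, namely
\[
\mathfrak{J}_n(X;t)-\mathfrak{J}(X;t)\simeq -\frac{1}{2S^2}\big(I_n-I\big)+\frac{I}{S^3}\big(S_n(t)-S\big),
\]
so that the limiting law of $\sqrt{nh_n}\,(\mathfrak{J}_n(X;t)-\mathfrak{J}(X;t))$ is governed by the joint behavior of $\sqrt{nh_n}(I_n-I)$ and $\sqrt{nh_n}(S_n(t)-S)$.

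The second step is to supply the asymptotic normality of these two pieces. Each is an average over the i.i.d.\ sample of a function that depends on $n$ through $h_n$, i.e.\ a triangular array, so I would verify a Lyapunov (or Lindeberg--Feller) condition and invoke the corresponding CLT. For the variances I would reuse the approximations from the consistency proof together with \eqref{Bias:f}--\eqref{Bias:S}: since $\mathrm{Var}(I_n)\simeq 4\int_t^\infty f^2(u)\,\mathrm{Var}(f_n(u))\,du\simeq \frac{4C_k}{nh_n}\int_t^\infty f^3(u)\,du$ and $\mathrm{Var}(S_n(t))\simeq \frac{C_k}{nh_n}S(t)$, one gets $\mathrm{Var}(\sqrt{nh_n}\,I_n)\to 4C_k\int_t^\infty f^3$ and $\mathrm{Var}(\sqrt{nh_n}\,S_n(t))\to C_k S(t)$. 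Under $h_n\to0$, $nh_n\to\infty$ (with the mild undersmoothing guaranteeing $\sqrt{nh_n}\times\text{bias}\to 0$), the bias terms drop out after centering at the true values.

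The third step is to assemble the variance through the delta method. Treating $I_n$ and $S_n(t)$ as asymptotically uncorrelated at the $\sqrt{nh_n}$ rate (the cross term being of smaller order), the gradient $\big(-\tfrac{1}{2S^2},\,\tfrac{I}{S^3}\big)$ yields
\[
\sigma^2_{\mathfrak{J}_n}=\frac{1}{4S^4}\cdot 4C_k\!\int_t^\infty\! f^3 +\frac{I^2}{S^6}\cdot C_k S=\frac{C_k}{S^4(t)}\left[\int_t^\infty f^3(x)\,dx+\frac{\big(\int_t^\infty f^2(x)\,dx\big)^2}{S(t)}\right],
\]
which is the claimed expression; Slutsky's theorem then gives the standard normal limit after dividing by $\sigma_{\mathfrak{J}_n}$. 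The past-extropy case is identical after replacing $\int_t^\infty$ by $\int_0^t$, $S_n(t)$ by $F_n(t)$, and $S(t)$ by $F(t)$, producing $\sigma^2_{\widetilde{\mathfrak{J}}_n}$ by the same computation.

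The hard part will be justifying the two asymptotic inputs rigorously rather than formally: establishing the CLT for $I_n=\int_t^\infty f_n^2$, which is a degenerate quadratic (U-statistic-type) functional whose limiting distribution and variance at rate $\sqrt{nh_n}$ require a projection/H\'ajek argument or a direct moment expansion, and confirming that the covariance between $I_n$ and $S_n(t)$ is genuinely negligible at this rate so that the delta-method variance collapses to the stated two-term form. Verifying the triangular-array Lyapunov condition and controlling the boundary contribution at $u=t$ in the integrals are the remaining technical points.
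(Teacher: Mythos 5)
Your proposal follows essentially the same route as the paper: the first-order (delta-method) linearization $\mathfrak{J}_n(X;t)-\mathfrak{J}(X;t)\simeq -\tfrac{1}{S^2(t)}\int_t^\infty f(u)(f_n(u)-f(u))\,du+\tfrac{I}{S^3(t)}(S_n(t)-S(t))$ is exactly the expansion the paper derives from its Eqs.\ (15)--(16), and both arguments then conclude by invoking the asymptotic normality of the kernel density estimator (the paper via Roussas) together with the variance approximations (13)--(14) and Slutsky's theorem. You are in fact more explicit than the paper about assembling $\sigma^2_{\mathfrak{J}_n}$ from the two variance contributions and about the technical points (triangular-array CLT, negligibility of the cross-covariance, undersmoothing of the bias) that the paper leaves implicit.
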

\begin{proof}
It is not hard to see that
\begin{equation*}
\frac{\int_{t}^{\infty}f^2_n(u)du}{S_n^2(t)}-\frac{\int_{t}^{\infty}f^2(u)du}{S^2(t)}\simeq
\frac{S^2(t)\left(\int_{t}^{\infty}f^2_n(u)du-\int_{t}^{\infty}f^2(u)du\right)
-\int_{t}^{\infty}f^2(u)du\left(S_n^2(t)-S^2(t)\right)}{S^4(t)},
\end{equation*}
for all $t>0.$ Now, Eqs. \eqref{eq:111} and \eqref{eq:222} leads to
\begin{equation*}
\mathfrak{J}_n(X;t)-\mathfrak{J}(X;t)\simeq -\frac{1}{S^2(t)}\int_{t}^{\infty}f(u)(f_n(u)-f(u))du+
\frac{\int_{t}^{\infty}f^2(u)du}{S^4(t)}(S_n(t)-S(t))S(t).
\end{equation*}
The proof is then completed using the asymptotic normality of $f_n(u)$ given by Roussas \cite{roussas1990asymptotic}. Analogously, the asymptotic normality of the past extropy estimator $\widetilde{\mathfrak{J}}_n(X;t)$  can also be established using similar techniques.
\end{proof}
\subsection{Simulation studies}
To thoroughly examine the behavior and performance characteristics of the proposed nonparametric extropy estimators, we conduct a Monte Carlo simulation study. For evaluating the kernel-based estimator of the REX, we utilize the standard exponential distribution as the underlying data-generating process. Conversely, to assess the estimator of the PEX, we employ the standard uniform distribution. In this case, it is not hard to see that $\mathfrak{J}(X;t)=-{1}/{4}$ and $\widetilde{\mathfrak{J}}(X;t)=-{1}/{2t}$ for all $t>0.$ The bias and root mean squared error (RMSE) are computed for various sample sizes $n=40, 50, 100,$ different values of $t$ and the bandwidth $h_n.$ A total of $5000$ iterations are conducted, and the results are presented in Tables \ref{weibull1}-\ref{weibull2}. It is clear that the RMSE decreases as
the sample size increases.
%
%%%%%%%%%%%%%%%%%%%%%%%%%%%%%%%%%%%%%%%%%%%%%%%%%%%%%%
\begin{table}[h] \centering
\caption{\label{weibull1} The Bias and RMSE of the estimate of the REX for different choices of $t,n$ and
$h_n.$}
%%%%%%%%%%%%%%%%%%%%%%%%%%%%%%%%%%%%%%%%%%
\vspace{0.30cm}
\resizebox{18cm}{!}{
%%%%%%%%%%%%%%%%%%%%%%%%%%%%%%%%%%%%%%%%%%
\begin{tabular}{cccccccccccccccccc}
    \hline
    \hline
&&\multicolumn{2}{c}{$h_n=0.1$}&\multicolumn{2}{c}{$h_n=0.3$}& \multicolumn{2}{c}{$h_n=0.5$}&\multicolumn{2}{c}{$h_n=0.7$}&\multicolumn{2}{c}{$h_n=0.9$}& \\
\hline
$n$ & $t$ &Bias&RMSE&Bias&RMSE&Bias&RMSE&Bias&RMSE&Bias&RMSE&\\
\hline
40 & 0.1& -0.038085&0.059534& 0.010660&0.037561& 0.037844&0.046405& 0.055946&0.061081& 0.070453&0.074467\\
   & 0.3& -0.051045&0.074785&-0.009041&0.043960& 0.018119&0.038829& 0.040444&0.049441& 0.057866&0.063829\\
   & 0.5& -0.064114&0.088706&-0.023851&0.054890& 0.000843&0.040533& 0.024819&0.042633& 0.042737&0.054026\\
   & 0.7& -0.077554&0.106174&-0.033854&0.065484&-0.015704&0.051362& 0.006712&0.042175& 0.026815&0.047245\\
   & 0.9& -0.097257&0.131391&-0.044237&0.079279&-0.028607&0.063402&-0.009240&0.052396& 0.011515&0.051262 \\
50 & 0.1& -0.030202&0.050679& 0.014239&0.034098& 0.039855&0.047251& 0.059226&0.062591& 0.074376&0.076681 \\
   & 0.3& -0.041059&0.061781&-0.005135&0.038446& 0.022257&0.037052& 0.044294&0.050211& 0.060934&0.065228 \\
   & 0.5& -0.049871&0.073093&-0.017570&0.047797& 0.006085&0.035809& 0.029365&0.041083& 0.047878&0.054740\\
   & 0.7& -0.063423&0.089724&-0.025864&0.059020&-0.009543&0.042997& 0.012594&0.036923& 0.033401&0.046125\\
   & 0.9& -0.076567&0.103604&-0.033456&0.066549&-0.020781&0.053567&-0.001605&0.042017& 0.018159&0.041508\\
100& 0.1& -0.012518&0.031112& 0.021881&0.030522& 0.045567&0.048521& 0.064972&0.066576&0.080664&0.081523\\
   & 0.3& -0.019357&0.038169& 0.005014&0.026334& 0.029231&0.035302& 0.050825&0.053541&0.069328&0.070452\\
   & 0.5& -0.024349&0.044137&-0.006644&0.030535& 0.014082&0.027651& 0.037193&0.041856&0.057059&0.058911\\
   & 0.7& -0.030529&0.051764&-0.012753&0.035958& 0.002491&0.027085& 0.023840&0.031864&0.044343&0.048285\\
   & 0.9& -0.037869&0.059713&-0.016207&0.040609&-0.006254&0.032069& 0.011541&0.027662&0.031952&0.038681\\
\end{tabular}
}
\end{table}
%%%%%%%%%%%%%%%%%%%%%%%%%%%%%%%%%%%%%%%%%%%%%%%%
%
%%%%%%%%%%%%%%%%%%%%%%%%%%%%%%%%%%%%%%%%%%%%%%%%%%%%%%
\begin{table}[h] \centering
\caption{\label{weibull2} The Bias and RMSE of the estimate of the PEX for different choices of $t,n$ and
$h_n.$}
%%%%%%%%%%%%%%%%%%%%%%%%%%%%%%%%%%%%%%%%%%
\vspace{0.30cm}
\resizebox{18cm}{!}{
%%%%%%%%%%%%%%%%%%%%%%%%%%%%%%%%%%%%%%%%%%
\begin{tabular}{cccccccccccccccccc}
    \hline
    \hline
&&\multicolumn{2}{c}{$h_n=0.1$}&\multicolumn{2}{c}{$h_n=0.3$}& \multicolumn{2}{c}{$h_n=0.5$}&\multicolumn{2}{c}{$h_n=0.7$}&\multicolumn{2}{c}{$h_n=0.9$}& \\
\hline
$n$ & $t$ &Bias&RMSE&Bias&RMSE&Bias&RMSE&Bias&RMSE&Bias&RMSE&\\
\hline
40 & 0.3& -0.224990&0.302406&-0.195709&0.283181&-0.175910&0.277147&-0.164311&0.283348&-0.164159&0.258154\\
   & 0.5& -0.100627&0.119046&-0.079870&0.100044&-0.063865&0.088384&-0.056714&0.084291&-0.055578&0.083333\\
   & 0.7& -0.063200&0.075474&-0.042561&0.052825&-0.032796&0.042699&-0.028561&0.040864&-0.027497&0.039973\\
   & 0.9& -0.046883&0.054922&-0.028877&0.033568&-0.020798&0.026526&-0.018064&0.023694&-0.016682&0.023635\\
   & 1.2& -0.093213&0.097936&-0.042628&0.045015&-0.021390&0.023654&-0.013744&0.016603&-0.011278&0.014185\\
50 & 0.3& -0.182023&0.237535&-0.157348&0.213388&-0.136926&0.196992&-0.125264&0.188979&-0.129408&0.195837\\
   & 0.5& -0.082346&0.101558&-0.068042&0.083678&-0.053127&0.069245&-0.046588&0.065578&-0.045154&0.063562\\
   & 0.7& -0.052339&0.061625&-0.037929&0.044500&-0.027700&0.034762&-0.023553&0.032967&-0.022152&0.031334\\
   & 0.9& -0.038676&0.044352&-0.025145&0.028471&-0.017438&0.021324&-0.014259&0.018992&-0.013384&0.018296\\
   & 1.2& -0.085876&0.089546&-0.039997&0.041699&-0.019487&0.020997&-0.011701&0.014000&-0.009328&0.011771\\
100& 0.3& -0.109826&0.133963&-0.092358&0.109498&-0.071569&0.094341&-0.064703&0.089040&-0.061479&0.089410\\
   & 0.5& -0.050749&0.059613&-0.046531&0.052216&-0.030685&0.037400&-0.024295&0.031799&-0.022137&0.031806\\
   & 0.7& -0.031093&0.035881&-0.026305&0.028974&-0.016906&0.019890&-0.012794&0.016643&-0.011327&0.015405\\
   & 0.9& -0.022528&0.025497&-0.018178&0.019492&-0.010785&0.012548&-0.008189&0.010128&-0.006920&0.009528\\
   & 1.2& -0.072761&0.074239&-0.035175&0.035777&-0.015625&0.016053&-0.008168&0.008871&-0.005505&0.006554\\
\end{tabular}
}
\end{table}
%%%%%%%%%%%%%%%%%%%%%%%%%%%%%%%%%%%%%%%%%%%%%%%%
%
\subsection{A real data}
To study the COVID-19 pandemic spread, Kasilingam \emph{et al.} \cite{kasilingam2021exploring} employed an exponential model. Their analysis targeted nations showing initial indications of containment efforts until March 26, 2020. The dataset comprised infected case percentages from 40 countries (excluding zero values), as listed below:\\
\noindent Data Set: 1.56, 8.51, 2.17, 0.37, 1.09, 9.84, 4.95, 3.18, 11.37, 2.81, 6.22, 1.87, 9.05, 2.44, 1.38, 4.17, 3.74, 1.37, 2.33, 7.80, 2.10, 0.47, 2.54, 4.92, 0.09, 0.18, 1.72, 1.02, 0.62, 2.34, 0.50, 2.37, 3.65, 0.59, 5.76, 2.14, 0.88, 0.95, 4.17, 2.25.\\
%
%%%%%%%%%%%%%%%%%%%%%%%%%%%%%%%%%%%%%%%%%%%%%%%%%%%%%%
\begin{table}[h] \centering
\caption{\label{estiamte:Teheo} Comparison of theoretical values and estimates of REX based on the exponential distribution for genuine COVID-19 infection data.}
%%%%%%%%%%%%%%%%%%%%%%%%%%%%%%%%%%%%%%%%%%
\vspace{0.30cm}
\resizebox{18cm}{!}{
%%%%%%%%%%%%%%%%%%%%%%%%%%%%%%%%%%%%%%%%%%
\begin{tabular}{cccccccccccccccccc}
    \hline
    \hline
&\multicolumn{2}{c}{$h_n=0.1$}&\multicolumn{2}{c}{$h_n=0.3$}& \multicolumn{2}{c}{$h_n=0.5$}&\multicolumn{2}{c}{$h_n=0.7$}&\multicolumn{2}{c}{$h_n=0.9$}& \\
\hline
$t$ &$\mathfrak{J}(X;t)$&$\mathfrak{J}_n(X;t)$&$\mathfrak{J}(X;t)$&$\mathfrak{J}_n(X;t)$&$\mathfrak{J}(X;t)$&
$\mathfrak{J}_n(X;t)$&$\mathfrak{J}(X;t)$&$\mathfrak{J}_n(X;t)$&$\mathfrak{J}(X;t)$&$\mathfrak{J}_n(X;t)$&\\
\hline
 0.1& -0.0800&-0.1090&-0.0800&-0.0854&-0.0800&-0.0799&-0.0800&-0.0768&-0.0800&-0.0741\\
 0.3& -0.0800&-0.1139&-0.0800&-0.0884&-0.0800&-0.0825&-0.0800&-0.0793&-0.0800&-0.0764\\
 0.5& -0.0800&-0.1185&-0.0800&-0.0913&-0.0800&-0.0852&-0.0800&-0.0818&-0.0800&-0.0787\\
 0.7& -0.0800&-0.1227&-0.0800&-0.0939&-0.0800&-0.0878&-0.0800&-0.0844&-0.0800&-0.0811\\
 0.9& -0.0800&-0.1287&-0.0800&-0.0967&-0.0800&-0.0903&-0.0800&-0.0869&-0.0800&-0.0835 \\
\end{tabular}
}
\end{table}
%%%%%%%%%%%%%%%%%%%%%%%%%%%%%%%%%%%%%%%%%%%%%%%%
%
The maximum likelihood estimate for the exponential distribution parameter is $\widehat{\lambda}=0.32$ and hence we examine the proximity of the REX estimator to the theoretical REX value. Various combinations of $t$ and $h_n$ are detailed in Table \ref{estiamte:Teheo}. Analysis of Table \ref{estiamte:Teheo} reveals that REX estimates approach
the theoretical value as $h_n$ and $t$ increases.

%---------------------------------------------------------------------------------------
\section{Conclusion}
This study has presented new findings on the residual and past extropy of an RV $X$. In the context of reliability theory, stochastic comparisons between $X$ and its residual lifetime play a crucial role in analyzing aging properties. We have performed a characterization of the DREX property in this domain using the monotonicity properties of the REX. Furthermore, we identified sufficient conditions for the preservation of the DREX and IPEX properties. These results have been applied to coherent systems, order statistics, and record values, providing valuable insights for researchers and practitioners engaged in extropy-based analysis. The results of this study contribute to a deeper understanding of the behavior and properties of RVs and provide a solid foundation for further research in this area.
Finally, we have introduced nonparametric estimators for the residual and past extropy measures. To evaluate the practical performance of these estimators, we have conducted Monte Carlo simulations and further included a real data analysis. The simulation studies have demonstrated the strong performance of the nonparametric extropy estimators. They have exhibited desirable statistical properties, such as low bias and RMSE, even in finite sample scenarios. The outcomes of this study can have potential applications in lithium-ion batteries remaining useful life prediction, leveraging extropy feature extraction and support vector regression techniques, as explored in similar work by Jia \emph{et al.} \cite{jia2021sample}. The proposed methods can be potentially applied in the field of remaining useful life prediction for rolling bearings, as suggested by Zhang \emph{et al.} \cite{zhang2023remaining}. The concepts and methodologies discussed in this paper have practical implications for machine learning and deep learning, benefiting from related research such as  Al-Qazzaz \emph{et al.} \cite{al2023automatic}, Wu \emph{et al.} \cite{wu2022enhance}, and Liu \emph{et al.} \cite{liu2018connectionist}, among others. By building upon the insights and techniques from these publications, researchers and practitioners can further enhance their models and algorithms in these domains.
\bigskip

%---------------------------------------------------------------------------------------------------------------
\noindent \textbf{Acknowledgments}\medskip

\noindent This work was supported by Researchers Supporting Project number (RSP2024R392), King Saud University,
Riyadh, Saudi Arabia.\textbf{\medskip }

%%%%%%%%%%%%%%%%%%%%%%%%%

\end{document}